\documentclass[reqno,a4paper,12pt]{amsart}

\usepackage[pdfpagelabels]{hyperref}
\usepackage{amssymb}
\usepackage{url}

\newtheorem{theorem}{Theorem}
\newtheorem{lemma}[theorem]{Lemma}
\newtheorem{corollary}[theorem]{Corollary}

\theoremstyle{definition}

\theoremstyle{remark}
\newtheorem{remark}{Remark}
\newtheorem{example}{Example}


\setlength\arraycolsep{2pt}

\addtolength{\hoffset}{-0.5cm}
\addtolength{\textwidth}{1cm}
\addtolength{\voffset}{-0.5cm}
\addtolength{\textheight}{1cm}


\newcommand{\nm}[1]{\lVert#1\rVert}

\newcommand{\D}{\mathbb{D}}
\newcommand{\B}{\mathcal{B}}

\newcommand{\N}{\mathbb{N}}
\newcommand{\Z}{\mathbb{Z}}

\newcommand{\C}{\mathbb{C}}

\renewcommand{\phi}{\varphi}

\newcommand{\BMOA}{\rm BMOA}
\newcommand{\VMOA}{\rm VMOA}


\begin{document}

\title[Slowly growing solutions of ODEs revisited]{Slowly growing solutions of ODEs revisited}
\thanks{The author is supported  in part by the Academy of Finland \#286877.}

\author{Janne Gr\"ohn}
\address{Department of Physics and Mathematics, University of Eastern Finland\\ 
\indent P.O. Box 111, FI-80101 Joensuu, Finland}
\email{janne.grohn@uef.fi}

\date{\today}

\subjclass[2010]{Primary 34C10; Secondary 30D45}
\keywords{Bloch space, $\BMOA$, growth of solution, linear differential equation, oscillation of solution, $\VMOA$}

\begin{abstract}
Solutions of the differential equation $f''+Af=0$ are considered assuming
that $A$ is analytic in the unit disc $\D$ and satisfies
\begin{equation} \label{eq:dag} 
\sup_{z\in\D} \, |A(z)| (1-|z|^2)^2 \log\frac{e}{1-|z|} < \infty. \tag{$\star$}
\end{equation}
By recent results in the literature, such restriction has been associated to coefficient 
conditions which place all solutions in the Bloch space~$\mathcal{B}$.
In this paper it is shown that any coefficient condition implying \eqref{eq:dag} fails
to detect certain cases when Bloch solutions do appear.
The converse problem is also addressed: What can be said about the growth of the coefficient $A$
if all solutions of $f''+Af=0$ belong to $\mathcal{B}$?
An overall revised look into slowly growing solutions
is presented, emphasizing function spaces $\mathcal{B}$, $\BMOA$ and $\VMOA$.
\end{abstract}

\maketitle


\section{Introduction}
 
Let $\mathcal{H}(\D)$ denote the collection of analytic functions in the (open) unit disc~$\D$
of the complex plane $\C$. It is well-known that the growth of the coefficient $A\in\mathcal{H}(\D)$ controls the 
growth of solutions $f\in\mathcal{H}(\D)$ of the linear differential equation
\begin{equation} \label{eq:de2}
  f''+Af=0,
\end{equation}
and vice versa. The recent study \cite{GHR:preprint} concerns conditions, given in terms of the coefficient~$A$, which
imply that all solutions of \eqref{eq:de2} belong to a~given space of slowly growing analytic functions.
Special attention is paid to $\B$ (Bloch space), $\BMOA$ (analytic functions of bounded mean oscillation) 
and $\VMOA$ (analytic functions of vanishing mean oscillation). These coefficient conditions
have in common that they all imply 
\begin{equation*}
  \nm{A}_{\mathcal{L}^1} = \sup_{z\in\D} \, |A(z)| (1-|z|^2)^2\log\frac{e}{1-|z|} < \infty,
\end{equation*}
which is the subject of this research.
The operator theoretic approach in \cite{GHR:preprint} is based on duality relations,
in contrast to this paper, where more classical tools are employed.

The search for coefficient conditions forcing all solutions of \eqref{eq:de2} to be of slow growth
has been active for many years. 
In the 1997 summer school \emph{Function Spaces and Complex Analysis} (Mekrij\"arvi Research Station, Finland), 
N.~Danikas posed the following problem:
\begin{enumerate}
\item[\rm (Q)] Find a~sharp condition for the coefficient $A$ which implies that all solutions of~\eqref{eq:de2}
belong to $\mathcal{B}$.
\end{enumerate}
It is known that, if $\nm{A}_{\mathcal{L}^1}$ is sufficiently small, then all solutions of \eqref{eq:de2} belong
to $\mathcal{B}$. This result was recently discovered with the best possible upper bound
for $\nm{A}_{\mathcal{L}^1}$ in \cite[Corollary~4(b) and Example~5(b)]{HKR:2016}.
This means that in the language of $\mathcal{L}^1$-norms, the problem (Q) has been solved.
The alternative approach in \cite{GHR:preprint} produces a family of coefficient
conditions, which all fall into the category $A\in\mathcal{L}^1$, see \cite[Theorems~10 and 11]{GHR:preprint}.

Our intention is to take a revised look into slowly growing solutions of \eqref{eq:de2}, and in particular,
to concentrate to the borderline case $A\in\mathcal{L}^1$. We show that any coefficient
condition implying $A\in\mathcal{L}^1$ is not
sufficiently delicate to detect certain special cases when Bloch solutions do appear. In this sense, the problem~(Q) remains open
as the most natural description is yet to be found. The converse problem is addressed in
Section~\ref{sec:converse}.


\section{Results}


\subsection{Growth of solutions}

Our first result solves the problem (Q) 
in terms of the maximum modulus $M_\infty(r,A) = \max_{|z|=r} |A(z)|$, $0\leq r<1$.


\begin{theorem} \label{thm:imp}
Let $A\in\mathcal{H}(\D)$. If there exists $0\leq r_0<1$ such that
\begin{equation} \label{eq:impass}
  \sup_{r_0<r<1} \, M_\infty(r,A) (1-r)^2  \exp \!\left( \, \int_{r_0}^r M_\infty(t,A) (1-t) \, dt \right) < \infty,
\end{equation}
then all solutions of \eqref{eq:de2} belong to $\mathcal{B}$.
\end{theorem}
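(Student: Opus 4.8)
The plan is to reduce the equation to a one‑parameter family of second‑order ODEs along the radii, run a Gronwall estimate adapted to the weight $1-|z|$, and then bootstrap once, using hypothesis~\eqref{eq:impass}, to upgrade the resulting a~priori (logarithmic) growth bound to the Bloch estimate. To begin, fix $\theta\in\R$ and set $y(r)=f(re^{i\theta})$ for $0\le r<1$. Since $f''+Af=0$, the function $y$ solves $y''+a(r)\,y=0$ on $[0,1)$ with $a(r)=e^{2i\theta}A(re^{i\theta})$, and $|a(r)|\le M_\infty(r,A)$, $|y(r)|=|f(re^{i\theta})|$, $|y'(r)|=|f'(re^{i\theta})|$. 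Because $f\in\B$ means $\sup_{z\in\D}|f'(z)|(1-|z|)<\infty$, and $|f'(z)|(1-|z|)$ is bounded on $\{|z|\le r_0\}$ by continuity, it suffices to bound $(1-r)\,|y'(r)|$ for $r_0\le r<1$, uniformly in $\theta$.

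For the core step, set $u(r)=|y(r)|+(1-r)\,|y'(r)|$. The maps $r\mapsto|y(r)|$ and $r\mapsto|y'(r)|$ are locally Lipschitz, with $\frac{d}{dr}|y(r)|\le|y'(r)|$ and $\frac{d}{dr}|y'(r)|\le|y''(r)|=|a(r)|\,|y(r)|\le M_\infty(r,A)\,|y(r)|$ almost everywhere, so
\[
  u'(r)\le |y'(r)|+\Big(-|y'(r)|+(1-r)\,M_\infty(r,A)\,|y(r)|\Big)=(1-r)\,M_\infty(r,A)\,|y(r)|\le (1-r)\,M_\infty(r,A)\,u(r)
\]
for almost every $r\in[r_0,1)$. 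The decisive feature is that the two occurrences of $|y'(r)|$ cancel exactly, which singles out $1-r$ as the correct weight. Gronwall's inequality then gives $u(r)\le u(r_0)\,e^{\Phi(r)}$ for $r_0\le r<1$, where $\Phi(r)=\int_{r_0}^{r}(1-t)\,M_\infty(t,A)\,dt$ (the degenerate case $u(r_0)=0$ forces $y\equiv0$).

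It remains to bootstrap. Integrating $y''=-a\,y$ from $r_0$ to $r$ and using $|y(t)|\le u(t)\le u(r_0)\,e^{\Phi(t)}$,
\[
  (1-r)\,|y'(r)|\le (1-r)\,|y'(r_0)|+(1-r)\,u(r_0)\int_{r_0}^{r}M_\infty(t,A)\,e^{\Phi(t)}\,dt .
\]
By~\eqref{eq:impass} there is $C_0<\infty$ with $M_\infty(t,A)\,(1-t)^2\,e^{\Phi(t)}\le C_0$ for $r_0<t<1$, whence $\int_{r_0}^{r}M_\infty(t,A)\,e^{\Phi(t)}\,dt\le C_0\int_{r_0}^{r}(1-t)^{-2}\,dt\le C_0\,(1-r)^{-1}$, and therefore $(1-r)\,|y'(r)|\le|y'(r_0)|+C_0\,u(r_0)$. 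Taking the supremum over $\theta$ and using $|y'(r_0)|=|f'(r_0e^{i\theta})|$ and $u(r_0)=|f(r_0e^{i\theta})|+(1-r_0)|f'(r_0e^{i\theta})|$, this bounds $(1-r)\,|f'(re^{i\theta})|$ on $r_0\le r<1$ by a constant depending only on $f$, $r_0$ and $C_0$; together with the bound on $\{|z|\le r_0\}$ this yields $f\in\B$.

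Routine points aside (local Lipschitz regularity of $|y|,|y'|$ and the elementary Gronwall lemma), the only genuine decision is the choice of the auxiliary quantity $u(r)=|y(r)|+(1-r)|y'(r)|$: natural‑looking alternatives such as $|y|^2+(1-r)^2|y'|^2$, or passing to the variable $s=\log\frac{1}{1-r}$, leak an extra factor $\log\frac{e}{1-|z|}$ and only place $f$ in a Bloch‑type growth class, not in $\B$. The second thing to get right is that \eqref{eq:impass} must be used exactly in the pointwise form $M_\infty(t,A)(1-t)^2e^{\Phi(t)}\le C_0$, which is precisely what bounds $\int_{r_0}^{r}M_\infty(t,A)\,e^{\Phi(t)}\,dt$ by a constant times $(1-r)^{-1}$ and lets the estimate close; knowing only $A\in\mathcal{L}^1$ would not be enough, in accordance with the thesis of the paper.
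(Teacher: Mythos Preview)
Your proof is correct and shares the paper's skeleton: a Gronwall argument along radii yields $|f(re^{i\theta})|\lesssim\exp\bigl(\int_{r_0}^{r}M_\infty(t,A)(1-t)\,dt\bigr)$, and then hypothesis~\eqref{eq:impass} is used to close. The implementations differ only in the packaging. For the Gronwall step, the paper quotes the representation formula from \cite[Theorem~4.1]{H:2000} and applies Gronwall to the resulting integral inequality for $|f|$, whereas your auxiliary quantity $u=|y|+(1-r)|y'|$ reaches the same bound through a differential inequality in which the two $|y'|$ contributions cancel exactly. For the closing step, the paper multiplies the growth bound on $|f|$ by $|A|$ to obtain $f''=-Af\in\mathcal{L}^0$ and then cites \cite[Theorem~5.4]{Z:2007} for $f\in\B$; your bootstrap instead integrates $y''=-ay$ once and uses \eqref{eq:impass} pointwise to bound $(1-r)|y'(r)|$ directly, which is in effect a hands-on version of that citation. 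So yours is the fully self-contained variant, while the paper's argument outsources both the integral representation and the implication $f''\in\mathcal{L}^0\Rightarrow f\in\B$ to existing references.
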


Theorem~\ref{thm:imp} is based on a~representation formula for solutions of \eqref{eq:de2} 
and the following elementary observation.
If $f$ is a~solution of \eqref{eq:de2} for $A\in\mathcal{H}(\D)$, then $f$ belongs to the Bloch space
\begin{equation*}
  \mathcal{B} = \Big\{ f\in\mathcal{H}(\D) : \nm{f}_{\mathcal{B}} = \sup_{z\in\D} |f'(z)| (1-|z|^2) < \infty \Big\}
\end{equation*}
if and only if 
\begin{equation} \label{eq:bb}
  \sup_{z\in\D} |f(z)| |A(z)| (1-|z|^2)^2< \infty.
\end{equation}
Theorem~\ref{thm:imp} sharpens \cite[Corollary~4(b)]{HKR:2016},
but fails to be an optimal solution to the problem (Q) as it shares the same defects with other known solutions;
see Remarks~\ref{remark:defects} and~\ref{remark:imp} below.

The growth space $\mathcal{L}^\alpha$ for $0\leq \alpha<\infty$ consists of those $A\in\mathcal{H}(\D)$ for which
\begin{equation*}
  \nm{A}_{\mathcal{L}^\alpha} = \sup_{z\in\D} \, |A(z)| (1-|z|^2)^2 \left( \log\frac{e}{1-|z|} \right)^\alpha < \infty.
\end{equation*}
The space $\mathcal{L}^0$ appears several times in the literature, and is usually denoted
by $H^\infty_2$ or $\mathcal{A}^{-2}$. In the sense of \eqref{eq:bb} it seems to be the correct
ballpark for the study of Bloch solutions of \eqref{eq:de2}. However, 
even if $\nm{A}_{\mathcal{L}^0}$ is arbitrarily small, it is possible that
all non-trivial solutions ($f\not\equiv 0$) of \eqref{eq:de2} lie outside~$\mathcal{B}$; see Example~\ref{ex:nonbloch} below.
If $A\in\mathcal{L}^\alpha$ for $1<\alpha<\infty$, then all solutions of \eqref{eq:de2} are bounded
in $\D$ by \cite[Theorem~4.2]{H:2000}. As explained in the Introduction, if $\nm{A}_{\mathcal{L}^1}$ is sufficiently
small, then all solutions of \eqref{eq:de2} belong to the Bloch space,
while the weaker condition $A\in\mathcal{L}^1$ allows some solutions to lie outside~$\mathcal{B}$.
The following result 
is in line with the heuristic principle which claims that
\emph{small change in $\nm{A}_{\mathcal{L}^1}$ has a~huge impact on solutions of \eqref{eq:de2}}.


\begin{theorem} \label{thm:nn}
If $\nm{A}_{\mathcal{L}^1} < 4/n$ for $n\in\N$, then all solutions $f$ of \eqref{eq:de2}
satisfy $f,f^2, \dotsc,f^n \in \mathcal{B}$.
\end{theorem}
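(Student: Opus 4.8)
The plan is to show that if $f$ solves \eqref{eq:de2} then each power $f^k$ satisfies a Bloch-type estimate, by iterating the elementary characterization \eqref{eq:bb} together with a Gronwall/integration argument. The starting point is that $g = f^k$ is \emph{not} a solution of \eqref{eq:de2}, so I would instead work directly with growth estimates. First I would recall the standard fact that a solution of \eqref{eq:de2} with $\nm{A}_{\mathcal{L}^1}<\infty$ grows slowly: writing the logarithmic derivative or using the representation behind Theorem~\ref{thm:imp}, one gets a bound of the form $|f(z)| \lesssim \bigl(\log\frac{e}{1-|z|}\bigr)^{c}$ where $c$ depends linearly on $\nm{A}_{\mathcal{L}^1}$; the precise constant is what makes the threshold $4/n$ appear. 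In fact I expect the sharp statement to be: if $\nm{A}_{\mathcal{L}^1}=b$, then every solution satisfies $\sup_z |f(z)|^2 (1-|z|^2)^{?}\ldots$ — more usefully, $|f(z)| = O\bigl((\log\frac{e}{1-|z|})^{\,b/4 + o(1)}\bigr)$, or an estimate of that flavor coming from a second-order Gronwall inequality applied to $|f|^2 + (1-|z|^2)^2|f'|^2$ or to $\max(|f|,|f'|)$ along radii.

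The key computation I would carry out is the following. Fix a solution $f$ and set $u(r) = \max_{|z|=r}\bigl(|f(z)| + (1-r)|f'(z)|\bigr)$ or a similar comparison quantity. Differentiating and using $f'' = -Af$ together with $|A(z)|(1-|z|^2)^2\log\frac{e}{1-|z|} \le b := \nm{A}_{\mathcal{L}^1}$, one obtains a differential inequality whose integration yields $u(r) \le C\,\bigl(\log\frac{e}{1-r}\bigr)^{b/4}$ (the exponent $b/4$ being exactly where the hypothesis $b < 4/n$, i.e. $b/4 < 1/n$, is used). Then for the power $f^k$ with $1\le k\le n$, its Bloch seminorm is controlled by
\begin{equation*}
  \nm{f^k}_{\mathcal{B}} = \sup_{z\in\D} k\,|f(z)|^{k-1}|f'(z)|(1-|z|^2)
  \lesssim \sup_{z\in\D} \Bigl(\log\tfrac{e}{1-|z|}\Bigr)^{(k-1)b/4}\cdot\Bigl(\log\tfrac{e}{1-|z|}\Bigr)^{b/4 - 1}\cdot(\text{bounded}),
\end{equation*}
wait — more carefully, I would bound $|f'(z)|(1-|z|^2)$ itself by $C(\log\frac{e}{1-|z|})^{b/4}$ and $|f(z)|^{k-1}$ by $C(\log\frac{e}{1-|z|})^{(k-1)b/4}$, so that $\nm{f^k}_{\mathcal{B}} \lesssim \sup_z (\log\frac{e}{1-|z|})^{kb/4} / (\log\frac{e}{1-|z|})^{0}$; this is finite precisely when... it is \emph{not} obviously finite. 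The resolution is that the Bloch estimate for $f$ alone is stronger than a mere growth bound: one has the refined inequality $|f'(z)|(1-|z|^2) \le C\,|f(z)|^{1-\theta}$ type control, or better, one should track $|f(z)^k|\,|A(z)|(1-|z|^2)^2$ directly, since by \eqref{eq:bb} applied cleverly $f^k\in\mathcal{B}$ iff $\sup_z |f(z)|^k |f''(z)| (1-|z|^2)^2 /|f'(z)| < \infty$ — this is getting complicated, so let me restructure.

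The clean approach: prove by induction on $k$ that $f, f^2, \dots, f^k \in \mathcal{B}$ whenever $b < 4/k$, using at each stage the growth bound $|f(z)|^j (1-|z|^2)\,|f'(z)| \le C_j$ together with the fact that the generalized equation $(f^k)'' = k(k-1)f^{k-2}(f')^2 + k f^{k-1} f'' = k(k-1)f^{k-2}(f')^2 - k A f^k$ lets one estimate $f^k$ via a linear ODE with coefficient $A$ plus a forcing term built from lower powers, to which a Gronwall argument applies with the induction hypothesis supplying the needed bounds on the forcing term. The main obstacle — and the place where the constant $4/n$ is genuinely sharp rather than an artifact — will be controlling the accumulated logarithmic powers so that the exponent stays below $1$; I expect this requires the careful two-sided estimate on solutions (both $f$ and $f'$, as a vector) rather than a one-sided growth bound, and the factor $1/4$ emerges from the sharp constant $1/4$ in the classical inequality $(1-|z|^2)^2 \log\frac{e}{1-|z|} \cdot (\text{second-order term}) \le \frac14(\cdots)$ underlying Theorem~\ref{thm:imp} and \cite[Corollary~4(b)]{HKR:2016}. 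So the heart of the proof is a quantitative refinement of the computation behind Theorem~\ref{thm:imp}, tracking how the constant degrades under taking $n$-th powers.
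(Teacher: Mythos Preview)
Your ingredients are right --- the logarithmic growth bound $|f(z)|\lesssim(\log\frac{e}{1-|z|})^{\alpha}$ for any $\alpha>\nm{A}_{\mathcal{L}^1}/4$, the formula for $(f^k)''$, and induction on $k$ --- and these are exactly what the paper uses. What is missing is the organizing principle that makes the induction close, and without it your argument does not go through.

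The gap is this: you try to invoke \eqref{eq:bb} or to bound $\nm{f^k}_{\mathcal{B}}$ directly via $k|f|^{k-1}|f'|(1-|z|^2)$, and, as you yourself discover, neither closes. The characterization \eqref{eq:bb} applies only to solutions of \eqref{eq:de2}, and $f^k$ is not one; your proposed Gronwall treatment of the inhomogeneous equation for $f^k$ is unnecessary machinery that you never actually set up. The correct reduction is the elementary fact (\cite[Theorem~5.4]{Z:2007}) that $g\in\mathcal{B}$ if and only if $\sup_{z\in\D}|g''(z)|(1-|z|^2)^2<\infty$, i.e.\ $g''\in\mathcal{L}^0$. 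With this in hand the induction is a one-line pointwise estimate, no Gronwall at all: write $(f^k)''=kf'(f^{k-1})'-kAf^k$. The first term satisfies $|kf'(f^{k-1})'|(1-|z|^2)^2\le k\,\nm{f}_{\mathcal{B}}\nm{f^{k-1}}_{\mathcal{B}}$ by the induction hypothesis $f^{k-1}\in\mathcal{B}$ (and $f\in\mathcal{B}$ from the base case). For the second term, since $\nm{A}_{\mathcal{L}^1}<4/n\le 4/k$, the growth lemma applies with exponent $\alpha=1/k$, giving $|f(z)|^k\lesssim\log\frac{e}{1-|z|}$; this exactly cancels the $(\log\frac{e}{1-|z|})^{-1}$ in $|A|(1-|z|^2)^2$, so $|Af^k|(1-|z|^2)^2$ is bounded. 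Hence $(f^k)''\in\mathcal{L}^0$ and $f^k\in\mathcal{B}$.

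So the threshold $4/n$ drops out immediately from the exponent $\nm{A}_{\mathcal{L}^1}/4$ in the growth lemma --- you need $\nm{A}_{\mathcal{L}^1}/4<1/k$ to take $\alpha=1/k$ --- with no accumulated log-powers to track and no differential inequality inside the induction. Gronwall (or the representation formula) is used once, upstream, to prove the growth lemma itself.
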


For $1/2<\alpha<\infty$, the coefficient condition $A\in\mathcal{L}^\alpha$
places all solutions of \eqref{eq:de2} 
in $\bigcap_{0<p<\infty} H^p$, see \cite[Corollary~1.9]{R:2007}. 
This property is no longer true for $\alpha=1/2$ as
certain solutions may lie outside the Nevanlinna class $\mathcal{N}$; 
apply \cite[Theorem~4]{P:1982} to $Q(r)=(1-r)^{-2} ( \log(e/(1-r)))^{-1/2}$, $0\leq r<1$.
It seems that non-Nevanlinna solutions produced in this manner
do not belong to $\mathcal{B}$ as they are exponentials of very badly behaved Bloch functions
themselves. 
The following result indicates that not all Bloch solutions of \eqref{eq:de2}
are smooth enough to be contained in $\mathcal{N}$. By the discussion above, these solutions cannot
be detected by any coefficient condition which implies $A\in \mathcal{L}^1$.

As usual, the Hardy space $H^p$ for $0<p<\infty$ consists of
$f\in \mathcal{H}(\D)$ for which
\begin{equation*}
  \nm{f}_{H^p}^p = \lim_{r\to 1^-} \, \frac{1}{2\pi} \, \int_0^{2\pi} |f(re^{i\theta})|^p \, d\theta < \infty,
\end{equation*}
while the Nevanlinna class $\mathcal{N}$ contains $f\in\mathcal{H}(\D)$ such that
\begin{equation*}
  \lim_{r\to 1^-} \, \frac{1}{2\pi} \, \int_0^{2\pi} \log^+ |f(re^{i\theta})| \, d\theta<\infty,
  \quad \log^+ = \max \{ \log, 0\}.
\end{equation*}


\begin{theorem} \label{thm:ex}
Let  $0<C<\infty$. Then, there exists a coefficient $A\in\mathcal{H}(\D)$ with $\nm{A}_{\mathcal{L}^0} < C$ 
such that \eqref{eq:de2} admits a (zero-free) solution $f\in \mathcal{B} \setminus \mathcal{N}$.
\end{theorem}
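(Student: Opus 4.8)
The plan is to produce the solution directly in the form $f=e^{g}$ with $g\in\mathcal{H}(\D)$ to be constructed: then $f$ is automatically zero-free, and $A:=-f''/f=-\bigl(g''+(g')^{2}\bigr)$ makes $f$ a solution of $f''+Af=0$. With this reduction the statement becomes the construction of $g$ satisfying three requirements: first, $\sup_{z\in\D}\bigl|g''(z)+g'(z)^{2}\bigr|(1-|z|^{2})^{2}<C$, i.e.\ $\|A\|_{\mathcal{L}^{0}}<C$; second, $\sup_{z\in\D}|g'(z)|\,e^{\Real g(z)}(1-|z|^{2})<\infty$, which by \eqref{eq:bb} says $f\in\mathcal{B}$; and third, $\int_{0}^{2\pi}\bigl(\Real g(re^{i\theta})\bigr)^{+}d\theta\to\infty$ as $r\to1^{-}$, which says $f\notin\mathcal{N}$. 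Since $\int_{0}^{2\pi}\Real g(re^{i\theta})\,d\theta\equiv2\pi\Real g(0)$, the third requirement is equivalent to $\|\Real g_{r}\|_{L^{1}(\T)}\to\infty$; in particular $\Real g$ must be unbounded above, and large on a non-negligible portion of each circle $|z|=r$ — a single boundary singularity (e.g.\ $g=\log\log\frac{e}{1-z}$, which gives $f=\log\frac{e}{1-z}\in\mathcal{B}$) leaves $\int(\Real g_{r})^{+}$ bounded, so the construction must be spread out.

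I would build $g=\lambda\sum_{m\ge1}\gamma_{m}b_{m}$, where $\lambda>0$ is a smallness parameter (fixed at the end so that the first requirement holds with the prescribed $C$), each $b_{m}$ is a sum of conformal-type bumps supported in Carleson boxes over arcs of a common length $\ell_{m}$, the scales $\ell_{m}$ form a sparse sequence, and the boxes at stage $m$ are distributed so as to cover a fixed proportion of $\T$. The bumps are chosen so that on $|z|=r$ with $1-r\asymp\ell_{m}$ one has $|g'(z)|(1-|z|)\asymp\lambda\gamma_{m}$ while $g''$ inherits the localization, giving $|g''(z)|(1-|z|)^{2}=O(\lambda)$; then the first requirement follows on taking $\lambda$ small. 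The remaining two pull against each other: at any $z$, the second requirement permits $e^{\Real g(z)}$ to be no larger than a constant times $\bigl((1-|z|)|g'(z)|\bigr)^{-1}\asymp(\lambda\gamma_{m(r)})^{-1}$, which caps the growth of $\Real g$ at rate $\log\log\frac{1}{1-|z|}$, whereas the third requires $\Real g$ to attain a size tending to $\infty$ on a part of $\T$ large enough that $\|\Real g_{r}\|_{L^{1}}\to\infty$. Reconciling these forces one to distribute the stage-$m$ bumps so that they never stack up too high at any single point of a circle (else $\Real g$ overshoots the $\log\log$ ceiling and $f\notin\mathcal{B}$), yet so that their typical superposition over a positive fraction of each circle still tends to $\infty$ (so that an $\ell^{2}$/Khinchin-type lower bound for $\|\Real g_{r}\|_{L^{1}}$ yields $f\notin\mathcal{N}$). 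Selecting $\{\ell_{m}\}$, $\{\gamma_{m}\}$ and the arrangement of the boxes to meet this balance — the margins are only logarithmic — and upgrading the heuristic $\asymp$-estimates to bounds valid for every $z\in\D$ is where essentially all the work lies; I expect the control of the maximal superposition of the stage-$m$ bumps to be the principal obstacle.

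Granting such a $g$, the rest closes up: $f=e^{g}$ is zero-free; by Cauchy's estimate $\|g''\|_{\mathcal{L}^{0}}\lesssim\|g\|_{\mathcal{B}}$, so $\|A\|_{\mathcal{L}^{0}}\le\|g''\|_{\mathcal{L}^{0}}+\|g\|_{\mathcal{B}}^{2}=O(\lambda)+O(\lambda^{2})<C$; $f\in\mathcal{B}$ by the calibration and \eqref{eq:bb}; and $f\notin\mathcal{N}$ since $\int_{0}^{2\pi}(\Real g(re^{i\theta}))^{+}d\theta\to\infty$. An alternative route, in the spirit of \cite{P:1982}, would be to prescribe $A$ itself annulus by annulus, with $|A(z)|\lesssim C/(1-|z|)^{2}$, chosen so as to force a solution to accumulate exactly a $\log\log\frac{1}{1-|z|}$ amount of logarithmic growth in modulus over a substantial angular range, and then extract a zero-free Bloch, non-Nevanlinna solution from that construction; there too the tension between the Bloch ceiling and the Nevanlinna lower bound would be the crux.
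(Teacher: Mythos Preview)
Your proposal is a plan rather than a proof: you correctly reduce to building $g$ with the three listed properties, and you correctly isolate the tension between the Bloch ceiling $\Real g\lesssim\log\log\frac{1}{1-|z|}$ and the requirement $\|(\Real g_r)^+\|_{L^1(\T)}\to\infty$, but you do not construct such a $g$. You say yourself that ``essentially all the work lies'' in arranging the bumps so that the pointwise superposition stays below the ceiling while the $L^1$-mass diverges, and you leave that work undone. A direct construction along these lines may be feasible, but it is delicate (the margins, as you note, are only logarithmic) and you have not carried it out; as written there is no proof here.

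The paper sidesteps the whole balancing act. First, \cite[Theorem~4]{P:1982} applied with $Q(r)=C/(1-r)^2$ already supplies $g\in\mathcal{B}\setminus\mathcal{N}$ with $\|g\|_{\mathcal{B}}\lesssim C$ and $f=e^g\notin\mathcal{N}$ solving $f''+Af=0$ with $\|A\|_{\mathcal{L}^0}\lesssim C$ --- so the hard non-Nevanlinna part is off the shelf. The catch is only that this $f$ need not be Bloch. The idea you are missing is that Bloch can be \emph{imposed by composition}: set $E=f^{-1}(\mathcal{Z})$ for the integer lattice $\mathcal{Z}$, and let $I:\D\to\D\setminus E$ be the universal covering map, which is inner since the countable closed set $E$ has capacity zero. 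Then $h=f\circ I$ has range in $\C\setminus\mathcal{Z}$ and is therefore automatically Bloch (no schlicht discs of arbitrarily large radius); $h$ is zero-free because $f$ is; $h\notin\mathcal{N}$ by \cite[Proposition~3.3]{S:1979}; and the new coefficient $B=-(f\circ I)''/(f\circ I)=(A\circ I)(I')^2-(g'\circ I)\,I''$ satisfies $\|B\|_{\mathcal{L}^0}\le\|A\|_{\mathcal{L}^0}+4\|g\|_{\mathcal{B}}\lesssim C$ by Schwarz--Pick together with Ruscheweyh's higher-derivative estimate \cite{R:1985}. The composition trick thus converts your hard quantitative construction into three short citations; your ``alternative route in the spirit of \cite{P:1982}'' is in fact the paper's starting point, but the decisive step is the one you did not anticipate.
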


The following result complements Theorem~\ref{thm:ex}
by offering a condition under which non-Nevanlinna solutions do not appear.


\begin{theorem} \label{thm:zfbase}
If $\nm{A}_{\mathcal{L}^0}\leq 1$ and there exists one zero-free solution of \eqref{eq:de2} 
which belongs to $\bigcup_{0<p<\infty} H^p$, then all solutions
of \eqref{eq:de2} are in $\bigcup_{0<p<\infty} H^p$.
\end{theorem}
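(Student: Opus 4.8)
The plan is to exploit the classical fact that if $f_1,f_2$ form a solution basis of \eqref{eq:de2} normalized by the Wronskian $W(f_1,f_2)=f_1f_2'-f_1'f_2\equiv 1$, then every solution is a linear combination $c_1f_1+c_2f_2$, and $1/f$ (for a zero-free solution $f$) relates to the other basis element. Concretely, if $f$ is the given zero-free solution, then $g=f\cdot\int_0^z f^{-2}$ is a second, linearly independent solution, and $\{f,g\}$ spans the solution space. So it suffices to show $g\in\bigcup_{0<p<\infty}H^p$, since $\bigcup_{0<p<\infty}H^p$ is a vector space (it is increasing in $p$, and $H^p+H^q\subseteq H^{\min(p,q)}$). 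Thus the whole problem reduces to controlling $\int_0^z f(\zeta)^{-2}\,d\zeta$ in terms of the membership $f\in H^{p_0}$ for some $p_0>0$, using the hypothesis $\nm{A}_{\mathcal{L}^0}\le 1$.

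First I would record what $\nm{A}_{\mathcal{L}^0}\le1$ buys us. Since $f$ is zero-free, $h=f'/f$ is analytic and satisfies the Riccati equation $h'+h^2+A=0$, i.e. $h'=-(h^2+A)$; equivalently $u=-\log f$ (a branch) has $u''=h'+\text{(lower order)}$—more cleanly, $(f^{-1})''=(2(f'/f)^2-f''/f)f^{-1}=(2h^2+A)f^{-1}$, so $f^{-1}$ solves a related equation. The cleaner route: the quotient $g/f=\int_0^z f^{-2}$ has derivative $f^{-2}$, and $(g/f)$ being the ratio of two solutions is a locally univalent (indeed, its Schwarzian is $2A$) map, so one can try to bound $|f(z)|^{-2}(1-|z|^2)$ via a Bloch-type or normal-family estimate coming from $|A(z)|(1-|z|^2)^2\le 1$. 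The key analytic input I would use is the standard growth lemma: under $\nm{A}_{\mathcal{L}^0}\le 1$ one controls $\big|\log|f(z)|\big|$; combined with $f\in H^{p_0}$, which gives $\int_0^{2\pi}\log^+|f(re^{i\theta})|\,d\theta$ a bound of order $\log\frac{1}{1-r}$ (from $H^{p_0}\subseteq\mathcal{N}$ with quantitative Nevanlinna characteristic $\lesssim \frac1{p_0}\log^+\nm{f}_{H^{p_0}}^{p_0}$, essentially bounded), one gets that $\int_0^{2\pi}\big|\log|f(re^{i\theta})|\big|\,d\theta$ grows at most like $O\!\big(\log\frac{1}{1-r}\big)$, using Jensen-type control on the negative part $\log^-|f|$ because $f$ is zero-free and the integral of $\log|f|$ over circles is monotone.

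From here I would estimate the Hardy norm of $f^{-2}$, and then of $g=f\int_0^z f^{-2}$, on circles $|z|=r$. The natural estimate is
\begin{equation*}
  \frac{1}{2\pi}\int_0^{2\pi}\big|g(re^{i\theta})\big|^p\,d\theta
  \le \left(\frac{1}{2\pi}\int_0^{2\pi}|f(re^{i\theta})|^{2p}\,d\theta\right)^{1/2}
      \left(\frac{1}{2\pi}\int_0^{2\pi}\Big|\int_0^{re^{i\theta}} f^{-2}\Big|^{2p}\,d\theta\right)^{1/2},
\end{equation*}
so one needs $f\in H^{2p}$ (true for small $p$, shrinking $p_0$) and an $H^{2p}$-bound for the primitive $\int_0^z f^{-2}$. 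For the latter, integrating along the radius and using that $f^{-2}$ is zero-free with $\int\log|f^{-2}|$ controlled, one shows $M_\infty(r,f^{-2})$, or rather suitable integral means, grow only polynomially in $(1-r)^{-1}$ after choosing $p$ small; a subharmonicity/Hardy–Littlewood argument then upgrades pointwise radial growth of $f^{-2}$ to an $H^p$ bound for $g$. The main obstacle I anticipate is precisely this last step: passing from the one-sided, $H^p$-type information on $f$ (which does not immediately control $1/f$ pointwise) to an integral bound on $\int_0^z f^{-2}$ — one must rule out that $f^{-2}$ blows up too fast on a small arc, and this is where the hypothesis $\nm{A}_{\mathcal{L}^0}\le1$ must be used to force $|f(z)|$ to be bounded below by a power of $(1-|z|)$ on most of each circle (a "thin exceptional set" estimate), so that $|f(z)|^{-2}\lesssim (1-|z|)^{-N}$ off a small set and the primitive is in $H^p$ for $p$ small enough. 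Assembling these pieces gives $g\in\bigcup_{0<p<\infty}H^p$, hence every solution lies there.
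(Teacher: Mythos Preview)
Your reduction is correct: it suffices to show that $g=f\int_0^z f^{-2}\in\bigcup_{0<p<\infty}H^p$, and you even note in passing that $w=g/f=\int_0^z f^{-2}$ has Schwarzian $S_w=2A$. But this is exactly where the argument should close, and you walk past it. The hypothesis $\nm{A}_{\mathcal{L}^0}\le 1$ means $\nm{S_w}_{\mathcal{L}^0}\le 2$, so by Nehari's theorem $w$ is \emph{globally} univalent in $\D$ (not merely locally univalent, as you write). A univalent analytic function on $\D$ lies in $H^p$ for every $p<1/2$; hence $w\in\bigcup_p H^p$, and then $g=f\cdot w\in\bigcup_p H^p$ by H\"older (or by the subharmonicity of $|fw|^q$ for small $q$). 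That finishes the proof in one line after your setup.

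Because you miss Nehari, the remainder of your plan attempts to control $|f(z)|^{-2}$ and $\int_0^z f^{-2}$ by ad hoc growth estimates, Nevanlinna characteristics, and a vague ``thin exceptional set'' argument. None of this is made precise, and the obstacle you yourself flag---ruling out that $f^{-2}$ blows up too fast on small arcs---is genuine and is precisely what the Schwarzian hypothesis is designed to absorb. Without the univalence input there is no clear way to carry out these bounds; with it, they are unnecessary.

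For comparison, the paper's proof also pivots on Nehari but packages it slightly differently: from univalence of the quotient one extracts a second \emph{zero-free} solution $f_1$ (the image of a univalent map omits some value), observes that $\log(f_1/f_2)\in\BMOA$ since $f_1/f_2$ is univalent and zero-free (Baernstein), and then applies the John--Nirenberg/Cima--Schober fact that $e^{\BMOA}\subset\bigcup_p H^p$. Either route is short once Nehari is invoked; the missing idea in your proposal is precisely that invocation.
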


The coefficient condition $\nm{A}_{\mathcal{L}^0} \leq 1$ corresponds to the classical
univalency criterion \cite[Theorem~I]{N:1949} due to Nehari, which
implies that all non-trivial solutions of \eqref{eq:de2} have at most one zero in $\D$.
Theorem~\ref{thm:zfbase} should be compared to \cite[Theorem~4]{H:2013}
which holds in a~more general setting.


\subsection{Oscillation of solutions}

If $A\in\mathcal{H}(\D)$ and there exists $0<R<1$ such that
$|A(z)| (1-|z|^2)^2 \leq 1$ for all $R<|z|<1$,
then all non-trivial solutions of \eqref{eq:de2} vanish at most finitely many times in $\D$ \cite[Theorem~1]{S:1955}.
This is the case, in particular, if $A\in\mathcal{L}^\alpha$ for any $0<\alpha<\infty$.
The following example concerns a~case when all solutions belong to $\mathcal{B}$
while one of them has infinitely many zeros. This is Hille's example, see \cite{H:1949} and \cite[p.~162]{S:1955}.


\begin{example} \label{ex:hille}
Let $0<\gamma<\infty$. On one hand, all solutions of the differential equation~\eqref{eq:de2} for
$A(z)=(1+4\gamma^2)/(1-z^2)^2$, $z\in\D$,
are bounded and hence in $\mathcal{B}$. This follows from the estimates in \cite[p.~131]{S:2012}, for example. 
On the other hand, the particular solution
\begin{equation*}
  f(z)  = \sqrt{1-z^2} \, \sin \!\left( \gamma \log\frac{1+z}{1-z} \right), \quad z\in\D,
\end{equation*}
has  infinitely many (real) zeros $z_n = (e^{\pi n/\gamma}-1)/(e^{\pi n/\gamma}+1)$, $n\in\Z$.
\hfill $\diamond$
\end{example}


\begin{remark} \label{remark:defects}
By the discussion above, the coefficient condition $A\in\mathcal{L}^1$
implies that all non-trivial solutions of \eqref{eq:de2} belong to $\bigcap_{0<p<\infty} H^p$ 
and have at most finitely many zeros. We have shown that neither of these properties is 
characteristic to Bloch solutions of \eqref{eq:de2} under the restriction $A\in \mathcal{L}^0$.
\end{remark}

We point out that, although $A\in\mathcal{L}^1$ is not sufficient to
place all solutions of \eqref{eq:de2} in $\mathcal{B}$, it
guarantees that solutions are normal in the sense
\begin{equation*}
\sup_{z\in\D} \, f^{\#}(z) (1-|z|^2) = \sup_{z\in\D} \, \frac{|f'(z)|}{1+|f(z)|^2} \, (1-|z|^2) < \infty.
\end{equation*}
This follows from \cite[Proposition~7]{GNR:preprint} by using the fact that
all non-trivial solutions have at most finitely many zeros provided that $A\in\mathcal{L}^1$.


\subsection{Solutions of finite valance}
Let $n(f,\zeta) = \# \{ z\in\D : f(z)=\zeta\}$
be the counting function for $\zeta$-points of $f\in\mathcal{H}(\D)$;
let $D(z,r)$ denote the Euclidean disc of radius $0<r<\infty$ centered at $z\in\D$;
and let $dm$ be the Lebesgue area measure.
According to \cite[Satz~1]{P:1977}, if $f\in\mathcal{B}$ and
\begin{equation} \label{eq:intnumber}
  V_f= \sup_{z\in\C} \, \int_{D(z,1)} n(f,\zeta)\, dm(\zeta) < \infty,
\end{equation}
then $f\in\BMOA$. Hence, Bloch functions of finite valence belong to~$\BMOA$.
Recall that $f\in\BMOA$ if and only if
$\nm{f}_{\BMOA}^2 = \sup_{a\in\D}\, \nm{g_a}_{H^2}^2<\infty$,
where $g_a(z) = f(\varphi_a(z)) - f(a)$
and $\varphi_a(z)=(a-z)/(1-\overline{a}z)$ for $a,z\in\D$.

If $\nm{A}_{\mathcal{L}^1}$ is sufficiently small, then all 
finitely valent solutions of \eqref{eq:de2} are not only in $\BMOA$
but also possess a~specific type of regularity.


\begin{theorem} \label{thm:bmoa_not}
Let $A\in \mathcal{L}^1$. If $f$ is a~solution of \eqref{eq:de2}
which satisfies \eqref{eq:intnumber}, then 
\begin{equation} \label{eq:newest}
  \int_{\D} |f'(z)|^2 \left( \log\frac{e}{1-|z|} \right)^{-\beta} dm(z)<\infty
\end{equation}
for any $\nm{A}_{\mathcal{L}^1}/2<\beta<\infty$.
\end{theorem}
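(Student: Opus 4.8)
The plan is to exploit the logarithmic structure of $A\in\mathcal{L}^1$ together with the finite-valence hypothesis through a Littlewood--Paley / Green's formula argument. First I would record the pointwise consequence of the differential equation: since $f''=-Af$, for a solution $f$ of \eqref{eq:de2} we have $|f''(z)|\le \nm{A}_{\mathcal{L}^1}\,|f(z)|\,(1-|z|^2)^{-2}\big(\log\frac{e}{1-|z|}\big)^{-1}$. The idea is to estimate $\int_\D |f'(z)|^2 w(z)\,dm(z)$ with weight $w(z)=\big(\log\frac{e}{1-|z|}\big)^{-\beta}$ by integrating by parts twice, converting powers of $|f'|$ and $|f|$ into an expression controlled by the Laplacian of $|f|^2$ (equivalently, by $n(f,\zeta)$ via the Jensen/area formula) and by a sub-mean-value estimate. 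A clean way to do this is to use that $|f|^2$ is subharmonic with $\Delta|f|^2=4|f'|^2$, and that Bloch functions of finite valence are in $\BMOA$ by \cite[Satz~1]{P:1977}, so in particular $f\in H^2$; thus the "hard part" is really to upgrade the mere membership $f\in\BMOA$ to the quantitative weighted estimate \eqref{eq:newest}, and to see that the threshold is exactly $\nm{A}_{\mathcal{L}^1}/2$.

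Concretely, I would argue as follows. Fix $\beta$ with $\nm{A}_{\mathcal{L}^1}/2<\beta<\infty$ and pick $\delta>0$ with $\nm{A}_{\mathcal{L}^1}<2\beta-2\delta$. Consider the integral $I=\int_\D |f'|^2 \big(\log\frac{e}{1-|z|}\big)^{-\beta}\,dm$. Using Green's formula on the disc $|z|\le r$ and letting $r\to1$, write $I$ in terms of $\int_\D \Delta\!\big(|f|^2\big)\,\Phi\,dm$ plus boundary contributions, where $\Phi$ is a primitive of the logarithmic weight; since $\Delta|f|^2=4|f'|^2$ this is circular, so instead I would iterate once more and bound $|f'|^2\le \frac12\big(|f|^2+|f''|^2\big)$ pointwise via the equation is not directly available, so the correct move is: multiply the equation by $\overline{f'}$, take real parts, and integrate against the weight to get an identity relating $\int |f'|^2 w$, $\int \Real(A f\overline{f'})\,w$ and $\int |f'|^2 w'$. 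Then apply Cauchy--Schwarz to the middle term and absorb, using $|A(z)|(1-|z|^2)^2\le \nm{A}_{\mathcal{L}^1}\big(\log\frac{e}{1-|z|}\big)^{-1}$ together with the elementary weight inequality $w'(z)\big(1-|z|^2\big)\lesssim \beta\, w(z)\big(\log\frac{e}{1-|z|}\big)^{-1}$, which is where the constant $\nm{A}_{\mathcal{L}^1}/2$ emerges as the balance point.

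The main obstacle, and the place I expect most of the work, is handling the interplay between $\int|f'|^2 w$ and $\int |f|^2 (1-|z|^2)^{-2} w \big(\log\frac{e}{1-|z|}\big)^{-1}$: a priori the latter need not be finite, so one cannot simply absorb. Here is where the finite-valence hypothesis enters: by \cite[Satz~1]{P:1977} we have $f\in\BMOA$, hence the measure $|f'(z)|^2(1-|z|^2)\,dm(z)$ is a Carleson measure, and more is true — for a finitely valent $f$, $n(f,\zeta)$ is bounded, so $\int_\D |f'|^2 g\,dm \le V_f \sup_\zeta \sum g$ over $\zeta$-points, giving a Stolz-type control. I would use this to show that the "bad" term $\int |f|^2 (1-|z|^2)^{-2} w\big(\log\frac{e}{1-|z|}\big)^{-1}\,dm$ is dominated, after integration by parts in the radial variable and using $|f(z)|^2\le \nm{f}_{\mathcal{B}}^2\int \ldots$ plus the Carleson-measure property, by a constant multiple of $I$ with a factor $\nm{A}_{\mathcal{L}^1}/(2\beta)<1$ in front; absorbing this term into the left-hand side then yields finiteness of $I$ and completes the proof. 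The routine verifications — the weight derivative estimate, the Cauchy--Schwarz absorption with explicit constants, and the limiting argument as $r\to1$ to justify the integrations by parts — I would leave as standard computations.
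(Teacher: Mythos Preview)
Your proposal does not reach a proof, and the difficulty is exactly at the step you yourself flag as ``the main obstacle''. The absorption scheme you describe --- multiply $f''+Af=0$ by $\overline{f'}$, integrate against the weight, apply Cauchy--Schwarz, and claim that the term involving $\int_{\D}|f|^2(1-|z|^2)^{-2}\big(\log\tfrac{e}{1-|z|}\big)^{-\beta-1}\,dm$ is bounded by $c\,I$ with $c=\nm{A}_{\mathcal{L}^1}/(2\beta)<1$ --- is never substantiated, and there is no evident Hardy-type inequality with these weights that would deliver it. Knowing only that $f\in\BMOA$ (hence that $|f'(z)|^2(1-|z|^2)\,dm$ is Carleson) is far too weak here: it gives no usable control on $|f|^2(1-|z|^2)^{-2}$, and your vague ``Stolz-type control'' from finite valence does not connect to this quantity. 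Note also that if one simply inserts any pointwise growth bound for $|f|$ into $\int_{\D}|f|^2(1-|z|^2)^{-2}\big(\log\big)^{-\beta-1}dm$, the factor $(1-|z|^2)^{-2}$ already makes the integral diverge, so that route is blocked.

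The paper's argument is much shorter and rests on two ingredients you do not use. First, Lemma~\ref{lemma:reprelog}(i) gives the sharp pointwise growth $M_\infty(r,f)\lesssim\big(\log\tfrac{e}{1-r}\big)^{\alpha}$ for any $\alpha>\nm{A}_{\mathcal{L}^1}/4$; this is where the threshold $\nm{A}_{\mathcal{L}^1}/2$ actually originates (as $2\alpha$). Second, the hypothesis $V_f<\infty$ is used \emph{directly} through the area formula
\[
\int_{D(0,r)}|f'(z)|^2\,dm(z)=\int_{f(D(0,r))}n(f,\zeta)\,dm(\zeta)\le 4\big(M_\infty(r,f)+1\big)^2\,V_f\lesssim\Big(\log\tfrac{e}{1-r}\Big)^{2\alpha},
\]
not via the $\BMOA$ corollary. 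With this estimate for the unweighted Dirichlet integral on $D(0,r)$ in hand, a single application of Fubini converts $\int_{\D}|f'|^2\big(\log\tfrac{e}{1-|z|}\big)^{-\beta}dm$ into $\int_0^1\big(\int_{D(0,r)}|f'|^2\,dm\big)\,\beta(1-r)^{-1}\big(\log\tfrac{e}{1-r}\big)^{-\beta-1}dr$, which converges precisely when $\beta>2\alpha$. Choosing $\alpha$ with $\nm{A}_{\mathcal{L}^1}/4<\alpha<\beta/2$ finishes the proof. The Green's-formula/absorption machinery you propose is neither needed nor, as sketched, sufficient.
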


Example~\ref{ex:valent} below shows that, regardless of the size of $\nm{A}_{\mathcal{L}^1}$,
both finitely and infinitely valent (non-trivial) solutions of \eqref{eq:de2} are possible.


\subsection{Converse problem} \label{sec:converse}

Before going any further, 
we discuss a~problem converse to Theorem~\ref{thm:imp}: How is the growth of the coefficient
$A\in\mathcal{H}(\D)$ restricted if all solutions of \eqref{eq:de2} are in $\mathcal{B}$?

The argument in \cite{S:2012} reveals the following estimates.
Let $f_1,f_2$ be linearly independent \emph{bounded} solutions of \eqref{eq:de2} for $A\in\mathcal{H}(\D)$.
Without any loss of generality, we may assume that
$f_1 f_2' - f_1'f_2 = 1$. By a straight-forward computation
$A = f_1'f_2''- f_1''f_2'$, and therefore $\sup_{z\in\D} |A(z)| (1-|z|^2)^3 < \infty$.
Moreover, the spherical derivative $w^{\#} = |w'|/(1+|w|^2)$ of $w=f_1/f_2$ satisfies
$w^{\#} = 1/(|f_1|^2 + |f_2|^2) \leq |f_1'|^2 + |f_2'|^2$, and hence 
$\sup_{z\in\D} \, w^{\#}(z) (1-|z|^2)^2 < \infty$.
It is clear that these estimates withstand the weaker assumption $f_1,f_2\in\mathcal{B}$. 
The following result improves the growth estimate for $A$ 
and is related to a~problem mentioned in \cite[p.~131]{S:2012}.


\begin{theorem} \label{thm:conv_preli}
Let $f_1,f_2\in\mathcal{B}$ be linearly independent solutions of \eqref{eq:de2} for $A\in\mathcal{H}(\D)$.
Then, $\sup_{z\in\D} |A(z)|(1-|z|^2)^{5/2}\lesssim\max\{\nm{f_1}_{\mathcal{B}},\nm{f_2}_{\mathcal{B}}\}<\infty$.
\end{theorem}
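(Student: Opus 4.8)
The plan is to estimate $|A(z_0)|$ at a fixed point $z_0\in\D$ by multiplying together two complementary estimates: one produced by the solution of \eqref{eq:de2} that is \emph{largest} at $z_0$, and one by the solution that \emph{vanishes} at $z_0$. Throughout write $\Phi(z)=|f_1(z)|^2+|f_2(z)|^2$ (positive in $\D$ since $f_1,f_2$ are linearly independent) and $M=\max\{\nm{f_1}_{\B},\nm{f_2}_{\B}\}$, and normalise the Wronskian as $f_1f_2'-f_1'f_2=1$.

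First I would record the quantitative form of \eqref{eq:bb}. If $g$ solves \eqref{eq:de2} and $g\in\B$, then $g'$ lies in the growth space $\A^{-1}$ with $\sup_{z}|g'(z)|(1-|z|^2)=\nm{g}_{\B}$, so Cauchy's estimate on the disc $D\bigl(z,\tfrac12(1-|z|)\bigr)$ gives $|g''(z)|(1-|z|^2)^2\lesssim\nm{g}_{\B}$ and, one order higher, $|g'''(z)|(1-|z|^2)^3\lesssim\nm{g}_{\B}$, with absolute constants. Combining the first of these with the equation $g''=-Ag$ yields the crucial local inequality
\begin{equation*}
  |A(z)|\,|g(z)|\,(1-|z|^2)^2\lesssim\nm{g}_{\B},\qquad z\in\D .
\end{equation*}

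Next, the two special solutions. For the \emph{large} solution put $g=\overline{f_1(z_0)}\,f_1+\overline{f_2(z_0)}\,f_2$; this is again a $\B$-solution of \eqref{eq:de2} with $g(z_0)=\Phi(z_0)$ and $\nm{g}_{\B}\le(|f_1(z_0)|+|f_2(z_0)|)M\le\sqrt2\,\Phi(z_0)^{1/2}M$, so the local inequality at $z=z_0$ gives
\begin{equation*}
  |A(z_0)|\,\Phi(z_0)^{1/2}(1-|z_0|^2)^2\lesssim M .
\end{equation*}
For the \emph{vanishing} solution put $h=f_2(z_0)\,f_1-f_1(z_0)\,f_2$. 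Then $h$ solves \eqref{eq:de2}, $h(z_0)=0$, $h'(z_0)=-(f_1f_2'-f_1'f_2)(z_0)=-1$, $h''(z_0)=-A(z_0)h(z_0)=0$, and differentiating $h''=-Ah$ once more gives $h'''(z_0)=-A'(z_0)h(z_0)-A(z_0)h'(z_0)=A(z_0)$. Since $\nm{h}_{\B}\le(|f_1(z_0)|+|f_2(z_0)|)M\le\sqrt2\,\Phi(z_0)^{1/2}M$, the third-order Cauchy estimate yields
\begin{equation*}
  |A(z_0)|=|h'''(z_0)|\lesssim\frac{\nm{h}_{\B}}{(1-|z_0|^2)^3}\lesssim\frac{\Phi(z_0)^{1/2}M}{(1-|z_0|^2)^3}.
\end{equation*}
Multiplying the last two displays and cancelling the positive factor $\Phi(z_0)^{1/2}$ leaves $|A(z_0)|^2(1-|z_0|^2)^5\lesssim M^2$, that is $|A(z_0)|(1-|z_0|^2)^{5/2}\lesssim M$; taking the supremum over $z_0\in\D$ proves the theorem and shows the supremum is finite.

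The individual computations are routine once this scheme is in place, so the step I expect to be the only non-obvious one is the idea itself: one should \emph{not} try to bound $A=-g''/g$ for a single fixed solution (that only reproduces the exponent $3$ recalled before the statement), but instead exploit that \eqref{eq:de2} is second order to pass, at each point $z_0$ separately, to the two solutions best adapted to $z_0$ — the one peaking there and the one with a simple zero there — and then read off $A(z_0)=h'''(z_0)$ from the latter, a quantity which the Bloch bound controls by $\nm{h}_{\B}/(1-|z_0|^2)^3$. It will also be worth checking carefully that the Bergman-type Cauchy estimates for $\B$-functions are used with absolute constants and that the Wronskian normalisation is tracked correctly through $g$ and $h$.
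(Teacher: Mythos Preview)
Your proof is correct and follows essentially the same route as the paper's: two complementary bounds for $|A(z)|$---one scaling like $M\,\Phi(z)^{-1/2}(1-|z|^2)^{-2}$, the other like $M\,\Phi(z)^{1/2}(1-|z|^2)^{-3}$---combined so that $\Phi$ cancels. The paper obtains the second bound from the Wronskian identity $A=f_1'''f_2-f_1f_2'''$ applied directly to the fixed basis (which is exactly your $h'''(z_0)=A(z_0)$ unpacked), and combines the two via $\min\{x/y,y\}\le\sqrt{x}$ rather than by multiplication; these are purely presentational differences.
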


Here $\lesssim$ denotes a one sided estimate up to a constant.
The betting is that Theorem~\ref{thm:conv_preli} is not sharp.
It would be desirable to show $A\in\mathcal{L}^0$ if $f_1,f_2\in\mathcal{B}$.
We do not know whether this is true (even for $f_1,f_2$ bounded), however.
Theorem~\ref{thm:conv_preli} fails to be true if we have information only on one non-trivial
solution of \eqref{eq:de2}. For example, $f(z)=\exp(-(1+z)/(1-z))$ is a bounded solution of \eqref{eq:de2}
for $A(z)=-4z/(1-z)^4$, $z\in\D$. In this case \eqref{eq:de2} admits also non-Bloch solutions such as
\begin{equation*} 
  f(z) \int_0^z \frac{1}{f(\zeta)^2} \, d\zeta, \quad z\in\D,
\end{equation*}
which is linearly independent to $f$ and grows too fast on the positive real axis to be included in $\mathcal{B}$
(by the Bernoulli-l'H\^{o}pital theorem).

Let $A\in\mathcal{H}(\D)$. If there exist linearly independent \emph{bounded} solutions
$f_1, f_2$ of \eqref{eq:de2} such that $\inf_{z\in\D} \, (|f_1(z)|+ |f_2(z)|) >0$, then $A\in \mathcal{L}^0$
by an~argument based on the corona theorem \cite[p.~3]{GHR:preprint}.
We extend this observation for $\mathcal{B}$ with an~argument independent of the corona theorem.


\begin{theorem} \label{thm:conv}
Let  $f_1,f_2\in\mathcal{B}$ be linearly independent solutions of \eqref{eq:de2} for $A\in\mathcal{H}(\D)$
such that $\inf_{z\in\D} \, (|f_1(z)|+ |f_2(z)|) >0$. Then, $A\in \mathcal{L}^0$ and
$(f_1/f_2)^{\#}$ is bounded in $\D$.
\end{theorem}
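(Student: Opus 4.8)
The plan is to exploit the hypotheses directly through the differential equation, avoiding the corona theorem entirely. First I would record two elementary facts. Since $f_1,f_2$ are linearly independent solutions of \eqref{eq:de2}, which has no first order term, their Wronskian $W=f_1f_2'-f_1'f_2$ is a nonzero constant, say $W\equiv c$. And writing $\delta=\inf_{z\in\D}\,(|f_1(z)|+|f_2(z)|)>0$, the inequality $2(|f_1|^2+|f_2|^2)\geq(|f_1|+|f_2|)^2$ gives the uniform lower bound $|f_1(z)|^2+|f_2(z)|^2\geq \delta^2/2$ for all $z\in\D$; in particular $\max\{|f_1(z)|,|f_2(z)|\}\geq\delta/2$ at every point.

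For the statement on the spherical derivative, let $w=f_1/f_2$. A direct computation gives $w'=-c/f_2^2$, whence $w^{\#}=|w'|/(1+|w|^2)=|c|/(|f_1|^2+|f_2|^2)$. Combined with the lower bound above this yields $w^{\#}(z)\leq 2|c|/\delta^2$ for all $z\in\D$, so $(f_1/f_2)^{\#}$ is bounded in $\D$; this sharpens the estimate $\sup_{z\in\D}\,w^{\#}(z)(1-|z|^2)^2<\infty$ recalled in Section~\ref{sec:converse}.

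It remains to show $A\in\mathcal{L}^0$. The key auxiliary estimate is the standard fact that any Bloch function $g$ satisfies $|g''(z)|\lesssim \nm{g}_{\mathcal{B}}/(1-|z|^2)^2$, which I would obtain from Cauchy's integral formula for $g''$ over the circle $\{|\zeta-z|=(1-|z|)/2\}$ together with the pointwise bound $|g'(\zeta)|\leq\nm{g}_{\mathcal{B}}/(1-|\zeta|^2)\lesssim\nm{g}_{\mathcal{B}}/(1-|z|)$ valid on that circle. Now fix $z\in\D$ and pick $j\in\{1,2\}$ with $|f_j(z)|\geq\delta/2$. Since $f_j''=-Af_j$,
\[
  |A(z)|\,(1-|z|^2)^2 = \frac{|f_j''(z)|\,(1-|z|^2)^2}{|f_j(z)|} \lesssim \frac{\nm{f_j}_{\mathcal{B}}}{\delta} \leq \frac{\max\{\nm{f_1}_{\mathcal{B}},\nm{f_2}_{\mathcal{B}}\}}{\delta},
\]
and taking the supremum over $z\in\D$ gives $\nm{A}_{\mathcal{L}^0}<\infty$.

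I do not expect a serious obstacle: the only ingredients are the constancy of the Wronskian, the elementary pointwise inequality separating the two solutions, and the Cauchy-type bound on $g''$ for $g\in\mathcal{B}$. The only point requiring mild care is that the choice of the index $j$ in the last step depends on $z$, which is harmless because the resulting bound is uniform in $j$.
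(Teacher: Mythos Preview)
Your proof is correct and follows essentially the same approach as the paper's. The only cosmetic difference is that for the bound on $A$ the paper writes the symmetric identity $|A|=(|f_1A|+|f_2A|)/(|f_1|+|f_2|)$ and bounds the numerator by $|f_1''|+|f_2''|$, whereas you pick the index $j$ pointwise; both arguments rest on the same two ingredients (the Bloch-to-$\mathcal{L}^0$ bound on second derivatives and the uniform lower bound on $|f_1|+|f_2|$), and the spherical-derivative computation is identical.
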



\subsection{Solutions of bounded and vanishing mean oscillation} \label{sec:25}

Coefficient conditions, which place all solutions of \eqref{eq:de2} in $\BMOA$, are considered
in \cite{GHR:preprint}. We derive a result similar to \cite[Theorem~3]{GHR:preprint} by using known growth estimates
for solutions of \eqref{eq:de2}. This method is somewhat surprising, since
it was not known to work with slowly growing solutions.
By the Carleson measure description in
\cite[Theorem~1]{Z:2003}, Theorem~\ref{thm:bmoa} is weaker than \cite[Theorem~3]{GHR:preprint}.


\begin{theorem} \label{thm:bmoa}
Let $A\in\mathcal{H}(\D)$. If 
\begin{equation} \label{eq:bmoaa}
  \sup_{a\in\D} \, \left( \log\frac{e}{1-|a|} \right) \int_{\D} |A(z)| (1-|\varphi_a(z)|^2) \, dm(z)
\end{equation}
is sufficiently small, then all solutions of \eqref{eq:de2} belong to $\BMOA$.
\end{theorem}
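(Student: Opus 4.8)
The plan is to use the standard representation of solutions of \eqref{eq:de2} as fixed points of an integral operator, and then to translate the smallness of the quantity in \eqref{eq:bmoaa} into smallness of the associated Carleson-type quantity controlling the $\BMOA$ norm of $f'$. Concretely, let $f$ be a solution of \eqref{eq:de2}; without loss of generality normalize so that $f(0)$ and $f'(0)$ are controlled (or work with a basis $f_1,f_2$ with $f_1 f_2'-f_1'f_2=1$). Since $f''=-Af$, we have $f'(z) = f'(0) - \int_0^z A(\zeta) f(\zeta)\, d\zeta$. The key preliminary fact I would invoke is the known growth estimate for solutions: under the hypothesis that \eqref{eq:bmoaa} is small we certainly have $A\in\mathcal{L}^1$ (the $\mathcal{L}^1$-norm is dominated by \eqref{eq:bmoaa} up to a constant, testing near the boundary point and using $1-|\varphi_a(z)|^2 \gtrsim 1-|z|^2$ on a fixed hyperbolic ball), so by the results in the Introduction/\cite{HKR:2016} all solutions lie in $\mathcal{B}$ with $\nm{f}_{\mathcal{B}}$ controlled; in particular $|f(z)| \lesssim \log\frac{e}{1-|z|}$.

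First I would recall the $\BMOA$ description via Carleson measures: $f\in\BMOA$ with small norm iff $|f'(z)|^2(1-|z|^2)\, dm(z)$ is a Carleson measure with small Carleson constant, equivalently
\begin{equation*}
  \sup_{a\in\D} \int_{\D} |f'(z)|^2 (1-|z|^2)(1-|\varphi_a(z)|^2)\, \frac{dm(z)}{(1-|z|^2)^2}
\end{equation*}
is small; more usefully, it suffices to show $\sup_{a\in\D}\int_{\D} |f'(z)|^2 (1-|\varphi_a(z)|^2)\, dm(z)$ is small. Next I would substitute $f'(z) = f'(0) - \int_0^z Af$ into this integral. The constant term $f'(0)$ contributes $|f'(0)|^2 \int_\D (1-|\varphi_a(z)|^2)\,dm(z) \lesssim |f'(0)|^2$, which is harmless (and can be absorbed if one normalizes, or simply noting $\BMOA\supset$ constants trivially and $f'(0)$ is a fixed constant multiple — actually one estimates the full solution). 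The main term requires bounding $\int_\D \big|\int_0^z A(\zeta)f(\zeta)\,d\zeta\big|^2 (1-|\varphi_a(z)|^2)\, dm(z)$.

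The heart of the argument is to control $\big|\int_0^z A f\big|$ pointwise and then feed it back. Using $|f(\zeta)|\lesssim \nm{f}_{\mathcal{B}}\log\frac{e}{1-|\zeta|}$ and $A\in\mathcal{L}^1$, one gets along a radial-type path $\big|\int_0^z A(\zeta)f(\zeta)\, d\zeta\big| \lesssim \nm{f}_{\mathcal{B}}\int_0^{|z|} \nm{A}_{\mathcal{L}^1}(1-t)^{-2}\,dt \lesssim \nm{f}_{\mathcal{B}}\nm{A}_{\mathcal{L}^1}(1-|z|)^{-1}$; this reproduces the Bloch bound but is too lossy for the Carleson estimate. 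Instead I would keep the integral and swap orders: write $\int_\D |\int_0^z Af|^2(1-|\varphi_a(z)|^2)\,dm(z)$ and expand the square as a double integral over $(\zeta_1,\zeta_2)$, or more cleanly use that $g:=\int_0^z Af$ satisfies $g' = Af$ with $g(0)=0$, so $g\in\BMOA$ with small norm iff $|A(z)f(z)|^2(1-|z|^2)\,dm(z)$ has small Carleson constant. Thus it suffices to show
\begin{equation*}
  \sup_{a\in\D} \int_\D |A(z)|^2 |f(z)|^2 (1-|z|^2)^2 (1-|\varphi_a(z)|^2)\,\frac{dm(z)}{(1-|z|^2)}
\end{equation*}
is small. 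Now insert $|A(z)|(1-|z|^2)^2 \le \nm{A}_{\mathcal{L}^1}/\log\frac{e}{1-|z|}$ and $|f(z)|^2 \lesssim \nm{f}_{\mathcal{B}}^2 (\log\frac{e}{1-|z|})^2$, giving integrand $\lesssim \nm{A}_{\mathcal{L}^1}\nm{f}_{\mathcal{B}}^2 |A(z)|(1-|\varphi_a(z)|^2)(\log\frac{e}{1-|z|})$, so the supremum is $\lesssim \nm{A}_{\mathcal{L}^1}\nm{f}_{\mathcal{B}}^2 \cdot (\text{quantity } \eqref{eq:bmoaa})$. Since both $\nm{A}_{\mathcal{L}^1}$ and \eqref{eq:bmoaa} are small (and $\nm{f}_{\mathcal{B}}$ is controlled by a universal constant once \eqref{eq:bmoaa} is small), the product is small, and a standard bootstrap/fixed-point estimate closes the argument and yields $f\in\BMOA$.

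The main obstacle I anticipate is the bootstrapping: the estimate $|f(z)|\lesssim\nm{f}_{\mathcal{B}}\log\frac{e}{1-|z|}$ already uses a global argument, and to get the right quantitative threshold one should probably set up the whole estimate self-consistently — i.e., let $\mathcal{M}=\sup_{a}\big(\int_\D |f'|^2(1-|\varphi_a|^2)\,dm\big)^{1/2}$, show $\mathcal{M} \lesssim |f'(0)| + \big(\text{const}\cdot\eqref{eq:bmoaa}\big)^{1/2}\cdot(\text{something involving } \mathcal{M} \text{ or } \nm{f}_{\mathcal{B}})$, and then absorb. Getting the dependence on $\log\frac{e}{1-|z|}$ factors exactly right (the $\mathcal{L}^1$ gains one power of $\log$ in the denominator, $|f|^2$ costs two, \eqref{eq:bmoaa} supplies one), so the arithmetic is $-1 + 2 - 1 = 0$ and it balances — but verifying there is no hidden logarithmic loss in the path-integral or in passing between the Bloch estimate and the Carleson-measure formulation is the delicate point. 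A secondary technical nuisance is handling the non-radial integration path for $\int_0^z Af$ and the need to invoke the Carleson-measure characterization of $\BMOA$ in the "only if" direction with control of constants, which is classical but should be cited carefully (e.g. via \cite[Theorem~1]{Z:2003}).
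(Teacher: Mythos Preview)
There is a genuine gap. After writing $f'=f'(0)-g$ with $g=\int_0^{\cdot}Af$, what you must bound is $\sup_a\int_\D|g(z)|^2(1-|\varphi_a(z)|^2)\,dm(z)$, but you pivot to ``$g\in\BMOA$'', which is the analogous condition on $g'$, not on $g$; these are not the same. The natural repair is to use the second-derivative characterization $f\in\BMOA\Leftrightarrow|f''(z)|^2(1-|z|^2)^3\,dm(z)$ is Carleson, which with $f''=-Af$ leads to $\sup_a\int_\D|A(z)|^2|f(z)|^2(1-|z|^2)^2(1-|\varphi_a(z)|^2)\,dm(z)$. Your substitutions then leave
\[
\sup_{a\in\D}\int_\D|A(z)|\Big(\log\frac{e}{1-|z|}\Big)(1-|\varphi_a(z)|^2)\,dm(z),
\]
with the logarithm evaluated at $z$. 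This is \emph{strictly stronger} than \eqref{eq:bmoaa}: writing $d\mu=|A(z)|(1-|z|^2)\,dm(z)$, the hypothesis \eqref{eq:bmoaa} says that $\mu$ is a $1$-logarithmic Carleson measure, whereas you need $\log\frac{e}{1-|z|}\,d\mu$ to be an ordinary Carleson measure, and a dyadic decomposition of a Carleson square shows the latter does not follow from the former. Your ``$-1+2-1=0$'' bookkeeping is right for the exponents but conceals exactly this mismatch in where the logarithm is localized.

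The paper bypasses the Carleson-measure route entirely. It uses Baernstein's characterization $f\in\BMOA\Leftrightarrow\sup_{a,r}m(r,g_a)<\infty$ via the Nevanlinna proximity functions of $g_a=f\circ\varphi_a-f(a)$, observes that $g_a$ solves the non-homogeneous equation $g_a''+B_a g_a'+C_a g_a=-f(a)\,C_a$ with $C_a=(A\circ\varphi_a)(\varphi_a')^2$, and applies the growth estimate \cite[Corollary~3(a)]{HKR:2009} to bound $m(r,g_a)$. After a conformal change of variable the relevant terms are controlled exactly by \eqref{eq:bmoaa}, and the single logarithm enters through the forcing amplitude $|f(a)|\lesssim\nm{f}_{\mathcal{B}}\log\frac{e}{1-|a|}$ --- hence already localized at $a$, which is precisely why the hypothesis has the form it does.
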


Coefficient conditions, which place all solutions of \eqref{eq:de2} in $\VMOA$, are also discussed
in \cite{GHR:preprint}. We consider two related results which, as opposed to ones in \cite{GHR:preprint}, 
are given in terms of the radial growth of the coefficient.
Recall that $f\in\VMOA$ if and only if $\nm{g_a}_{H^2}^2\to 0^+$ as $|a|\to 1^-$.


\begin{theorem} \label{thm:vmoa}
Let $A\in\mathcal{H}(\D)$. If there exists $0\leq r_0<1$ such that
\begin{equation} \label{eq:vmoageneral}
  \int_{r_0}^1 M_\infty(r,A)^2 
  \exp\!\left(\, 2 \int_{r_0}^r M_\infty(t,A) (1-t) \, dt \right) (1-r^2)^3\, dr<\infty,
\end{equation}
then all solutions of \eqref{eq:de2} belong to $\VMOA$.
\end{theorem}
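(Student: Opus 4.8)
The plan is to combine the classical Gronwall-type growth estimate for solutions of \eqref{eq:de2} with the order-two area-integral (Carleson measure) description of $\VMOA$; the point is that the integrand in \eqref{eq:vmoageneral} is, up to a constant depending on the solution, exactly the radial majorant one obtains for the measure $|f''(z)|^2(1-|z|^2)^3\, dm(z)$. First I would fix a solution $f$ of \eqref{eq:de2} and, for $\theta\in\R$, set $W_\theta(r)=|f(re^{i\theta})|+(1-r)\,|f'(re^{i\theta})|$. Using $\tfrac{d}{dr}f(re^{i\theta})=e^{i\theta}f'(re^{i\theta})$ and $\tfrac{d}{dr}f'(re^{i\theta})=-e^{i\theta}A(re^{i\theta})f(re^{i\theta})$, and noting that $\tfrac{d}{dr}|f(re^{i\theta})|\le|f'(re^{i\theta})|$ while $\tfrac{d}{dr}|f'(re^{i\theta})|\le M_\infty(r,A)|f(re^{i\theta})|$, the two first-order terms cancel and one gets, for almost every $r\in(r_0,1)$,
\[
  W_\theta'(r)\ \le\ (1-r)\,|A(re^{i\theta})|\,|f(re^{i\theta})|\ \le\ (1-r)\,M_\infty(r,A)\,W_\theta(r).
\]
Since $W_\theta$ is absolutely continuous on compact subsets of $[r_0,1)$, Gronwall's inequality gives $W_\theta(r)\le W_\theta(r_0)\,E(r)$, where $E(r)=\exp\!\big(\int_{r_0}^r M_\infty(t,A)(1-t)\,dt\big)$ and $W_\theta(r_0)\le\max_{|w|=r_0}|f(w)|+(1-r_0)\max_{|w|=r_0}|f'(w)|$ is independent of $\theta$. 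Hence $|f(z)|\lesssim E(|z|)$ on $\D$ (trivially on $|z|\le r_0$, where $E$ is bounded below by a positive constant), and therefore, by $f''=-Af$, $|f''(z)|\le M_\infty(|z|,A)\,|f(z)|\lesssim M_\infty(|z|,A)\,E(|z|)$ for $z\in\D$, the implied constants depending only on $f$ and $r_0$.

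Next I would use the well-known fact that $f\in\VMOA$ if and only if $d\mu_f(z):=|f''(z)|^2(1-|z|^2)^3\, dm(z)$ is a vanishing Carleson measure, i.e.\ $|I|^{-1}\mu_f(S(I))\to 0$ as $|I|\to 0^+$, where $S(I)$ is the Carleson box over an arc $I\subset\partial\D$ (the order-two form of the area description of $\BMOA$ and $\VMOA$; cf.\ \cite{Z:2003} and the references therein). By the previous step $d\mu_f(z)\le C\,g(|z|)\, dm(z)$, where
\[
  g(r)\ =\ M_\infty(r,A)^2\,E(r)^2\,(1-r^2)^3\ =\ M_\infty(r,A)^2\exp\!\left(2\int_{r_0}^r M_\infty(t,A)(1-t)\,dt\right)(1-r^2)^3
\]
is radial; integrating this majorant over $S(I)$ gives $\mu_f(S(I))\lesssim |I|\int_{1-c|I|}^{1}g(r)\,dr$ for a fixed constant $c>0$, hence $|I|^{-1}\mu_f(S(I))\lesssim\int_{1-c|I|}^{1}g(r)\,dr$. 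Now \eqref{eq:vmoageneral} is exactly the assertion $\int_{r_0}^1 g(r)\,dr<\infty$; since $M_\infty(\cdot,A)$ is bounded on $[0,r_0]$ while $E\le 1$ and $(1-r^2)^3\le 1$ there, also $\int_0^{r_0}g<\infty$, so $\int_0^1 g(r)\,dr<\infty$. In particular $\mu_f(\D)<\infty$ and $\int_{1-c|I|}^{1}g(r)\,dr\to 0$ as $|I|\to 0^+$, so $\mu_f$ is a vanishing Carleson measure and $f\in\VMOA$.

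I expect the only genuinely non-routine ingredient to be the Carleson-measure characterization invoked in the second step; if a self-contained treatment is wanted, it should be derived from the first-order description $\lim_{|a|\to 1^-}\int_\D|f'(z)|^2(1-|\varphi_a(z)|^2)\, dm(z)=0$ by an integration by parts that trades $f'$ for $f''=-Af$. This trade is essential: the crude pointwise estimate $|f'(z)|(1-|z|)\lesssim E(|z|)$ only yields a radial majorant of order $E(|z|)^2/(1-|z|)$ for $|f'(z)|^2(1-|z|^2)$, and $\int_0^1 E(r)^2(1-r)^{-1}\,dr$ need not converge under \eqref{eq:vmoageneral}; it is precisely because the hypothesis is calibrated to the order-two area measure that the differential equation itself has to enter the estimate.
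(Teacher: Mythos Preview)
Your proof is correct and follows essentially the same two-step strategy as the paper: first obtain the Gronwall-type bound $M_\infty(r,f)\lesssim E(r)$, then feed $|f''|\le M_\infty(\cdot,A)M_\infty(\cdot,f)$ into a second-derivative criterion for $\VMOA$. The only cosmetic differences are that the paper derives the growth estimate via the integral representation formula (as in the proof of Theorem~\ref{thm:imp}) rather than via your differential inequality for $W_\theta$, and that the paper invokes the ready-made sufficient condition \eqref{eq:vmoan} from \cite[Corollary~5.3]{R:2003} instead of unpacking the vanishing Carleson measure argument for $|f''(z)|^2(1-|z|^2)^3\,dm(z)$ by hand.
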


Theorem~\ref{thm:vmoa} gives rise to the following corollary.
The coefficient condition \eqref{eq:corvmoa}
allows solutions of \eqref{eq:de2} to be unbounded, see Example~\ref{ex:unbounded} below.


\begin{corollary}\label{cor:vmoa}
Let $A\in\mathcal{H}(\D)$. If
\begin{equation} \label{eq:corvmoa}
 \sup_{z\in\D} \, |A(z)| (1-|z|^2)^2 \left( \log\frac{e}{1-|z|} \right) \log \log\frac{e}{1-|z|} < \infty,
\end{equation}
then all solutions of \eqref{eq:de2} belong to $\VMOA$.
\end{corollary}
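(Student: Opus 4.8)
The plan is to deduce Corollary~\ref{cor:vmoa} directly from Theorem~\ref{thm:vmoa} by verifying that the coefficient condition \eqref{eq:corvmoa} implies the integral condition \eqref{eq:vmoageneral}. Write $\psi(r) = \log\frac{e}{1-r}$ and $\Psi(r) = \log\log\frac{e}{1-r} = \log\psi(r)$, so that \eqref{eq:corvmoa} reads $M_\infty(r,A) \lesssim (1-r)^{-2}\psi(r)^{-1}\Psi(r)^{-1}$ for all $0\le r<1$. The first step is to estimate the inner integral: for a fixed $r_0$ close to $1$ (chosen later) and $r_0<r<1$,
\begin{equation*}
  \int_{r_0}^r M_\infty(t,A)(1-t)\,dt \lesssim \int_{r_0}^r \frac{dt}{(1-t)\,\psi(t)\,\Psi(t)}.
\end{equation*}
Since $\frac{d}{dt}\psi(t) = \frac{1}{1-t}$ and $\frac{d}{dt}\Psi(t) = \frac{1}{(1-t)\psi(t)}$, the integrand is exactly $\frac{d}{dt}\log\Psi(t) = \frac{d}{dt}\log\log\log\frac{e}{1-t}$ (for $t$ large enough that all logarithms are positive), so the inner integral is bounded by $\log\Psi(r) - \log\Psi(r_0) + O(1) = \log\log\log\frac{e}{1-r} + O(1)$.

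Exponentiating, $\exp\!\big(2\int_{r_0}^r M_\infty(t,A)(1-t)\,dt\big) \lesssim \Psi(r)^{2}$ up to a multiplicative constant depending on $r_0$; here the constant is \emph{not} assumed small, which is fine because Theorem~\ref{thm:vmoa} has no smallness hypothesis. The second step is to substitute this together with $M_\infty(r,A)^2 \lesssim (1-r)^{-4}\psi(r)^{-2}\Psi(r)^{-2}$ into the outer integral of \eqref{eq:vmoageneral}:
\begin{equation*}
  \int_{r_0}^1 M_\infty(r,A)^2 \exp\!\Big(2\int_{r_0}^r M_\infty(t,A)(1-t)\,dt\Big)(1-r^2)^3\,dr
  \lesssim \int_{r_0}^1 \frac{(1-r)^3}{(1-r)^4\,\psi(r)^2\,\Psi(r)^2}\,\Psi(r)^2\,dr
  = \int_{r_0}^1 \frac{dr}{(1-r)\,\psi(r)^2}.
\end{equation*}
The final integral converges, since $\frac{d}{dr}\psi(r)^{-1} = -\frac{1}{(1-r)\psi(r)^2}$, giving the value $\psi(r_0)^{-1} < \infty$. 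Thus \eqref{eq:vmoageneral} holds and Theorem~\ref{thm:vmoa} applies.

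I expect no serious obstacle here; the proof is a short chain of elementary estimates. The only point requiring mild care is the bookkeeping with $r_0$: one must pick $r_0$ close enough to $1$ that $1-r<1$, $\psi(r)>1$ and $\Psi(r)>1$ throughout $(r_0,1)$, so that the iterated logarithms are positive and the antiderivative identities used above are valid; this also absorbs the behaviour of $M_\infty(r,A)$ on $[0,r_0]$, which is harmless since $A$ is analytic and hence bounded there. One should also note the trivial subtlety that if $A\equiv 0$ the statement is vacuous, and otherwise the hypothesis \eqref{eq:corvmoa} in particular gives $A\in\mathcal{L}^0$, so all the quantities appearing are finite. Everything else is routine, and the constant blow-up in the exponential term is immaterial precisely because Theorem~\ref{thm:vmoa} is stated without a smallness assumption.
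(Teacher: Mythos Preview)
Your approach is essentially the same as the paper's: bound the inner integral via the antiderivative $\log\Psi(t)=\log\log\log\frac{e}{1-t}$, exponentiate, and check convergence of the outer integral directly. One slip to correct: the implicit constant $C$ hidden in your estimate $M_\infty(t,A)(1-t)\lesssim \big((1-t)\psi(t)\Psi(t)\big)^{-1}$ migrates to the \emph{exponent} upon exponentiation, so what you actually obtain is
\[
  \exp\!\left(2\int_{r_0}^r M_\infty(t,A)(1-t)\,dt\right)\;\lesssim\;\Psi(r)^{2C},
\]
not $\Psi(r)^{2}$ --- this is exactly how the paper phrases the bound. The conclusion is unaffected: the outer integral then reduces to a constant multiple of $\int_{r_0}^{1}(1-r)^{-1}\psi(r)^{-2}\Psi(r)^{2C-2}\,dr$, which converges for every fixed $C$ (substitute $u=\psi(r)$ to get $\int^{\infty}(\log u)^{2C-2}u^{-2}\,du<\infty$).
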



\section{Proof of Theorem~\ref{thm:imp}} \label{sec:f}

The following proof is based on
the growth estimate \cite[Theorem~4.2]{H:2000} for solutions of \eqref{eq:de2}.
The known approaches to Bloch solutions of \eqref{eq:de2}
depend on other methods (duality relations \cite{GHR:preprint} and straight-forward integration \cite{HKR:2016}).


\begin{proof}[Proof of Theorem~\ref{thm:imp}]
Let $f$ be a non-trivial solution of \eqref{eq:de2}, and let $0\leq r_0<1$ be fixed.
If $r_0<r<1$ and $e^{i\theta}\in\partial\D$, then
\begin{equation*}
  f(re^{i\theta}) = f(r_0 e^{i\theta}) + f'(r_0e^{i\theta}) (re^{i\theta}-r_0 e^{i\theta}) 
  - \int_{r_0 e^{i\theta}}^{re^{i\theta}} f(\zeta)  A(\zeta)  (re^{i\theta}-\zeta) \, d\zeta,
\end{equation*}
by the representation theorem \cite[Theorem~4.1]{H:2000}. Therefore
\begin{equation*}
  |f(r e^{i\theta})| \leq \Big( M_\infty(r_0,f) + M_\infty(r_0,f')(1-r_0) \Big) 
  \exp\!\left(\, \int_{r_0}^r |A(t e^{i\theta})| (1-t) \, dt \right)
\end{equation*}
by Gronwall's lemma \cite[Lemma~5.10]{L:1993}.
This growth estimate, the assumption \eqref{eq:impass},
and the identity $f''=-Af$  imply that $f''\in \mathcal{L}^0$. This completes the proof as
$f\in\mathcal{B}$ by \cite[Theorem~5.4]{Z:2007}.
\end{proof}

We proceed to show that Theorem~\ref{thm:imp} sharpens \cite[Corollary~4(b)]{HKR:2016}.


\begin{remark} \label{remark:imp}
Suppose that the coefficient condition in \cite[Corollary~4(b)]{HKR:2016} holds,
that is, $A\in\mathcal{H}(\D)$ and 
\begin{equation} \label{eq:bz}
  \sup_{z\in\D} \, |A(z)| (1-|z|)^2 \int_0^{|z|} \frac{dr}{1-r} < 1.
\end{equation}
Fix any $0<r_0<1$, and compute
\begin{align*}
  & \sup_{r_0<r<1} \, M_\infty(r,A) (1-r)^2  \exp \!\left( \, \int_{r_0}^r M_\infty(t,A) (1-t) \, dt \right)\\
  & \qquad \leq \sup_{r_0<r<1} \,\frac{1}{\log\frac{1}{1-r}} \, \exp\!\left( \log\log\frac{1}{1-r} - \log\log\frac{1}{1-r_0} \right)
    = \frac{1}{\log\frac{1}{1-r_0}}.
\end{align*}
Therefore, the assumptions of Theorem~\ref{thm:imp} are satisfied.
We point out that Theorem~\ref{thm:imp} applies also to
cases such as $A(z) = (1-z)^{-2} ( \log (e/(1-z)))^{-1}$, $z\in\D$, for which \cite[Corollary~4(b)]{HKR:2016} 
is inconclusive. In particular, Theorem~\ref{thm:imp} can be utilized even with
equality in \eqref{eq:bz}. 

Under an~additional smoothness assumption,
the coefficient condition~\eqref{eq:impass} 
falls also into the category $A\in\mathcal{L}^1$.
This is the case, for example, if $M_\infty(r,A)(1-r)^2(\log (e/(1-r))$, $r_0<r<1$,
is increasing. \hfill $\diamond$
\end{remark}

The following example shows that, even if $\nm{A}_{\mathcal{L}^0}$ is arbitrarily small, it
is possible that \emph{all} non-trivial solutions of \eqref{eq:de2} lie outside $\mathcal{B}$.


\begin{example} \label{ex:nonbloch}
Let $1<\gamma<\infty$ be fixed. The differential equation \eqref{eq:de2} for 
$A(z)=(1-\gamma^2)/(1-z^2)^2$, $z\in\D$, admits linearly independent solutions
\begin{equation*}
  f_1(z) = \frac{(1+z)^{(\gamma+1)/2}}{(1-z)^{(\gamma-1)/2}}, \quad
  f_2(z) = \frac{(1-z)^{(\gamma+1)/2}}{(1+z)^{(\gamma-1)/2}}, \quad z\in\D,
\end{equation*}
which clearly satisfy $f_1,f_2\notin \mathcal{B}$. Since the singularities of $f_1,f_2$
are located at distinct points, we conclude that all linear combinations of $f_1,f_2$,
and therefore all non-trivial solutions of \eqref{eq:de2}, lie outside $\mathcal{B}$.
 \hfill $\diamond$
\end{example}


\section{Proof of Theorem~\ref{thm:nn}}

We begin with an~auxiliary result, which
shows that the coefficient condition $A\in\mathcal{L}^1$ is 
associated with solutions of at most logarithmic growth.
This should be compared to the case of the coefficient condition $A\in\mathcal{L}^0$,
which implies that all solutions of \eqref{eq:de2} satisfy $\sup_{z\in\D} |f(z)| (1-|z|^2)^p < \infty$
for sufficiently large $p=p(\nm{A}_{\mathcal{L}^0})< \infty$, see \cite[Example~1]{P:1982}.


\begin{lemma} \label{lemma:reprelog}
Let $A\in \mathcal{L}^1$.
\begin{enumerate}
\item[\rm (i)]
  All solutions $f$ of \eqref{eq:de2} satisfy
  \begin{equation} \label{eq:logest}
    \sup_{z\in\D} \, |f(z)| \left( \log\frac{e}{1-|z|} \right)^{-\alpha} < \infty
  \end{equation}
  for $\nm{A}_{\mathcal{L}^1}/4 < \alpha<\infty$.

\item[\rm (ii)]
  Any solution $f$ of \eqref{eq:de2}, which satisfies \eqref{eq:logest} for $\alpha=1$,
  belongs to $\mathcal{B}$.
\end{enumerate}
\end{lemma}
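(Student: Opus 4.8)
The plan is to prove both parts of Lemma~\ref{lemma:reprelog} directly from the representation-and-Gronwall machinery already used for Theorem~\ref{thm:imp}, now exploiting the fact that $A\in\mathcal{L}^1$ gives a pointwise bound on $|A(z)|$ with a logarithmic gain. Fix $r_0=0$ (or any convenient base point) and write, for $e^{i\theta}\in\partial\D$ and $0<r<1$,
\begin{equation*}
  f(re^{i\theta}) = f(0) + f'(0)\,re^{i\theta} - \int_0^{re^{i\theta}} f(\zeta)A(\zeta)(re^{i\theta}-\zeta)\,d\zeta,
\end{equation*}
so that by Gronwall's lemma
\begin{equation*}
  |f(re^{i\theta})| \lesssim \exp\!\left(\int_0^r |A(te^{i\theta})|(1-t)\,dt\right).
\end{equation*}
Since $A\in\mathcal{L}^1$ means $|A(te^{i\theta})|(1-t^2)^2\log\frac{e}{1-t}\le \nm{A}_{\mathcal{L}^1}$, we get $|A(te^{i\theta})|(1-t)\le \nm{A}_{\mathcal{L}^1}\big/\big(4(1-t)\log\frac{e}{1-t}\big)$ up to harmless constants from $(1-t^2)^2$ versus $4(1-t)^2$ near $t=1$; integrating, $\int_0^r |A(te^{i\theta})|(1-t)\,dt \le \tfrac{\nm{A}_{\mathcal{L}^1}}{4}\log\log\frac{e}{1-r} + C$. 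Exponentiating yields $|f(re^{i\theta})| \lesssim \big(\log\frac{e}{1-r}\big)^{\nm{A}_{\mathcal{L}^1}/4}$, which is exactly \eqref{eq:logest} for any $\alpha > \nm{A}_{\mathcal{L}^1}/4$ — the strict inequality absorbing the constant $C$ and the near-$t=1$ discrepancy between $(1-t^2)^2$ and $(1-t)^2$. This proves (i).

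For part (ii), the natural route is the equivalence \eqref{eq:bb}: a solution $f$ of \eqref{eq:de2} lies in $\mathcal{B}$ if and only if $\sup_{z\in\D}|f(z)||A(z)|(1-|z|^2)^2<\infty$. If $f$ satisfies \eqref{eq:logest} with $\alpha=1$, i.e. $|f(z)|\lesssim \log\frac{e}{1-|z|}$, then
\begin{equation*}
  |f(z)||A(z)|(1-|z|^2)^2 \lesssim \left(\log\frac{e}{1-|z|}\right)|A(z)|(1-|z|^2)^2 \le \nm{A}_{\mathcal{L}^1} < \infty,
\end{equation*}
using $A\in\mathcal{L}^1$ directly. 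Hence $f\in\mathcal{B}$, which is (ii). Note that (i) alone does not immediately give Bloch solutions, since the exponent $\nm{A}_{\mathcal{L}^1}/4$ can exceed $1$; the point of stating (ii) separately is that $\alpha=1$ is the threshold at which the product bound closes, and it is this interplay that Theorem~\ref{thm:nn} will leverage (with $\nm{A}_{\mathcal{L}^1}<4/n$ forcing $f,\dots,f^n$ each to satisfy the relevant growth bound, via $(f^k)'' = k(k-1)(f')^2 f^{k-2} + kf^{k-1}f'' $ and similar identities, so that each power meets an $\mathcal{L}^1$-type criterion).

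The main obstacle is bookkeeping the constants and the boundary behaviour near $t=1$ in part (i): one must be careful that the Gronwall exponent integrates to $\tfrac{\nm{A}_{\mathcal{L}^1}}{4}\log\log\frac{e}{1-r}$ with the \emph{correct} constant $1/4$ and not, say, $1/2$ — this is where the factor $(1-t^2)^2 = (1-t)^2(1+t)^2 \sim 4(1-t)^2$ as $t\to1^-$ matters, converting $(1-|z|^2)^2$ in the definition of $\nm{A}_{\mathcal{L}^1}$ into $4(1-t)^2$ and producing the division by $4$. The strict inequality $\alpha>\nm{A}_{\mathcal{L}^1}/4$ is exactly what lets us discard the additive constant $C$ and the $O(1)$ error from approximating $(1+t)^2$ by $4$ on $[0,r_0]$; one absorbs everything by taking $r$ large and noting $\log\log\frac{e}{1-r}\to\infty$. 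Part (ii) is then essentially immediate from \eqref{eq:bb}.
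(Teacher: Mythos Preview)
Your argument is correct, and for part~(ii) it coincides with the paper's: both invoke the equivalence \eqref{eq:bb} (which amounts to $f\in\mathcal{B}\iff f''\in\mathcal{L}^0$ via \cite[Theorem~5.4]{Z:2007}) together with $A\in\mathcal{L}^1$ to close the product bound.

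For part~(i) you take a genuinely different route. The paper does \emph{not} use the representation formula and Gronwall; instead it writes $|f(z)|\le \int_\delta^{|z|}\!\int_\delta^t |f''(sz/|z|)|\,ds\,dt + O(1)$, divides through by $(\log\frac{e}{1-|z|})^\alpha$, takes suprema over an annulus $\delta<|z|<R$, and obtains an inequality of the form
\[
  S \le S\cdot \nm{A}_{\mathcal{L}^1}\cdot \sup I_\alpha + O(1),
\]
where $I_\alpha(z)\to 1/(4\alpha)$ as $|z|\to 1^-$ by Bernoulli--l'H\^opital; choosing $\delta$ large makes the coefficient $\nm{A}_{\mathcal{L}^1}\cdot\sup I_\alpha<1$ precisely when $\alpha>\nm{A}_{\mathcal{L}^1}/4$, and one then solves for $S$. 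Your Gronwall approach gives the same exponent $\nm{A}_{\mathcal{L}^1}/4$ for the same reason --- the factor $(1+t)^2\to 4$ --- and the paper explicitly remarks that ``a~similar estimate could also be obtained from \cite[Theorem~4.2]{H:2000}'', which is exactly your method. The bootstrap argument is marginally cleaner in that the constant $1/4$ emerges from a single limit computation rather than from splitting the integral at a large $r_0$, but your version is more self-contained since it recycles the machinery of Theorem~\ref{thm:imp} verbatim.
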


The proof of Lemma~\ref{lemma:reprelog}(i) resembles that of \cite[Theorem~2]{GR:2017};
a~similar estimate could also be obtained from \cite[Theorem~4.2]{H:2000}.
Lemma~\ref{lemma:reprelog}(ii) is an~immediate consequence of \eqref{eq:bb} and \cite[Theorem~5.4]{Z:2007},
but plays an~important role in the proof of Theorem~\ref{thm:nn}.


\begin{proof}[Proof of Lemma~\ref{lemma:reprelog}]
(i) Let $f$ be a solution of \eqref{eq:de2}, and $0\leq \delta<R<1$. 
Since
\begin{equation*}
  |f(z)| 
 \leq \int_\delta^{|z|} \!\!\!\int_\delta^t \, \big| f''(sz/|z|) \big| \, ds dt 
  +M_\infty(\delta,f') + M_\infty(\delta,f), \quad \delta<|z|<1,
\end{equation*}
we obtain
\begin{align*}
  \sup_{\delta < |z| <R} \, \frac{|f(z)|}{\big( \log\frac{e}{1-|z|} \big)^\alpha}
  & \leq \left( \sup_{\delta < |\zeta| <R} \, \frac{|f(\zeta)|}{\big( \log\frac{e}{1-|\zeta|} \big)^\alpha} \right) \nm{A}_{\mathcal{L}^1}
    \, \sup_{\delta < |z| <R} I_\alpha(z) \\
  & \qquad + M_\infty(\delta,f') + M_\infty(\delta,f),
\end{align*}
where $I_\alpha(z)$ is as below. Since
\begin{equation*}
  \lim_{|z|\to 1^-} I_\alpha(z)
  =  \lim_{|z|\to 1^-}   \left( \log\frac{e}{1-|z|} \right)^{-\alpha}\int_0^{|z|} 
  \!\!\!\int_0^t \, \frac{\big( \log\frac{e}{1-s} \big)^{\alpha-1}}{(1-s^2)^2} \,  ds dt = \frac{1}{4\alpha}
\end{equation*}
by the Bernoulli-l'H\^{o}pital theorem, we deduce
\eqref{eq:logest} for $\nm{A}_{\mathcal{L}^1}/4 < \alpha<\infty$ by 
choosing a sufficiently large $0\leq \delta<1$, reorganizing the terms and finally letting $R\to 1^-$.
\end{proof}


\begin{proof}[Proof of Theorem~\ref{thm:nn}]
If $n=1$, then $f\in\mathcal{B}$ follows directly from Lemma~\ref{lemma:reprelog};
first, apply part (i) and then (ii). If $n\geq 2$, then we may assume that $f\in\mathcal{B}$
by the first part of the proof.
Since $\nm{A}_{\mathcal{L}^1} < 2$ by the assumption, every solution $f$ of \eqref{eq:de2} satisfies
\eqref{eq:logest} for $\alpha=1/2$
by Lemma~\ref{lemma:reprelog}(i). Note that $(f^2)''=2(f')^2 - 2 f^2 A$ by \eqref{eq:de2}.
We deduce $(f^2)''\in \mathcal{L}^0$, which implies $f^2\in\mathcal{B}$. 

We proceed by induction. Assume that $f^{k-1}\in \mathcal{B}$ for $2< k\leq n$. 
As above, we know that $f\in\mathcal{B}$. Since $\nm{A}_{\mathcal{L}^1} < 4/n\leq 4/k$
by the assumption, every solution $f$ of \eqref{eq:de2} satisfy \eqref{eq:logest} for 
$\alpha=1/k$ by Lemma~\ref{lemma:reprelog}(i). Now
\begin{equation*}
  (f^k)''=  k f' (f^{k-1})' - k f^k A 
\end{equation*}
by \eqref{eq:de2}. We deduce $(f^k)'' \in \mathcal{L}^0$, which gives $f^k\in\mathcal{B}$. The claim follows.
\end{proof}


\section{Proof of Theorem~\ref{thm:ex}}

The following proof takes advantage of universal covering maps to create a~Bloch
function with special properties. Similar arguments appear in the literature
several times. The idea for the following Bloch construction is borrowed from \cite[p.~229]{CCS:1980}.


\begin{proof}[Proof of Theorem~\ref{thm:ex}]
Let $0<C<\infty$.
By the proof of \cite[Theorem~4]{P:1982}, when applied to $Q(r)=C/(1-r)^2$, there exists 
$g\in \mathcal{B} \setminus \mathcal{N}$
with $\nm{g}_{\mathcal{B}} \lesssim C$ such that $f=e^g \not\in\mathcal{N}$ 
is a~solution of \eqref{eq:de2} for $A=-g''-(g')^2$ with $\nm{A}_{\mathcal{L}^0} \leq 4C$.

Let $\mathcal{Z}=\{ x+iy\in\C : x,y\in\Z\}$ be the set of integral lattice points, 
and let $E$ be its preimage $E = \{z\in\D: f(z) \in \mathcal{Z}\}$. Since $E\subset\D$ is a~countable closed set, $E$ 
has capacity zero and therefore the universal covering map from~$\D$ onto $\D \setminus E$ is an~inner function
\cite{F:1935}; see also \cite[p.~261]{S:1979}. Let this inner function be denoted by $I$. 
The function $f\circ I$ belongs to $\mathcal{B}$ since its image, contained in $\C\setminus \mathcal{Z}$, 
does not contain (schlicht) discs of arbitrarily large radius; see \cite[Theorem~2.6]{C:1979}, for example. 
Note that $f\circ I$ is non-vanishing, and define $B\in\mathcal{H}(\D)$ by 
\begin{equation*}
B = -\frac{(f \circ I)''}{f\circ I} = (A \circ I) (I')^2 - (g' \circ I) \, I''.
\end{equation*}
By the Schwarz-Pick lemma, and its extension \cite[Theorem~2]{R:1985}, we deduce
\begin{equation*}
\begin{split}
  \nm{B}_{\mathcal{L}^0} & \leq \sup_{z\in\D} \, (1-|z|^2)^2 \, 
  \frac{\nm{A}_{\mathcal{L}^0}}{(1-|I(z)|^2)^2} \cdot \frac{(1-|I(z)|^2)^2}{(1-|z|^2)^2} \\
  & \qquad + \sup_{z\in\D} \, (1-|z|^2)^2 \, \frac{\nm{g}_{\mathcal{B}}}{1-|I(z)|^2} \cdot \frac{2! \, (1-|I(z)|^2)}{(1-|z|)^2(1+|z|)}\\
  & \leq \nm{A}_{\mathcal{L}^0} + 4 \, \nm{g}_{\mathcal{B}}.
\end{split}
\end{equation*}
We conclude that $h = f\circ I \in \mathcal{B}$ is a zero-free solution of $h''+Bh=0$,
where $\nm{B}_{\mathcal{L}^0} \lesssim C$
with a comparison constant independent of $C$. Finally, \cite[Proposition~3.3]{S:1979} implies that 
$f \circ I$ does not belong to $\mathcal{N}$.
\end{proof}


\section{Proof of Theorem~\ref{thm:zfbase}}

The following result shows that slow growth of the coefficient
ensures the existence of zero-free solution bases.


\begin{lemma} \label{lemma:zfbase}
If $\nm{A}_{\mathcal{L}^0}\leq 1$, then \eqref{eq:de2} admits linearly independent 
zero-free solutions $f_1$ and $f_2$ such that $\log f_1 - \log f_2 \in\BMOA$.
\end{lemma}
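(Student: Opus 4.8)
The plan is to recall the classical connection between $\nm{A}_{\mathcal{L}^0}\leq 1$ and Nehari's univalence criterion, then to extract from it the existence of a zero-free basis and control the oscillation of the quotient. Concretely, the hypothesis $\nm{A}_{\mathcal{L}^0}\leq 1$ says precisely that $\sabs{A(z)}(1-\sabs{z}^2)^2\leq 1$ for all $z\in\D$, which is Nehari's condition \cite[Theorem~I]{N:1949} guaranteeing that every non-trivial solution of \eqref{eq:de2} has at most one zero in $\D$. I would first choose a base point, say $z=0$ (after a M\"obius change of variable if one wishes the zero set to be empty rather than a single point), and pick the solution $f_1$ normalized by $f_1(0)=1$, $f_1'(0)=0$ together with a second solution $f_2$ with $f_2(0)=0$, $f_2'(0)=1$. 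The Nehari bound forces $f_1$ to be zero-free on all of $\D$; to obtain a zero-free \emph{pair}, I would instead take $f_2 = f_1 \cdot u$ where $u$ is a primitive of $f_1^{-2}$, noting that $f_1^{-2}$ is holomorphic and zero-free, but then $f_2$ vanishes where $u$ does. The cleaner route is to observe that the quotient $w=f_1/f_2$ of any independent pair is locally univalent with Schwarzian $2A$, and that Nehari's theorem gives global univalence of $w$ on $\D$; hence $w$ omits at most its value at one point, and one can post-compose with a M\"obius transformation of the range so that both numerator and denominator of the resulting basis are zero-free.

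The heart of the matter is the $\BMOA$ membership of $\log f_1 - \log f_2$. Writing $w = f_1/f_2$, we have $\log f_1 - \log f_2 = \log w$, so the claim reduces to showing $\log w \in \BMOA$ for a suitable Nehari-univalent $w$. Here I would invoke the Wronskian normalization $f_1 f_2' - f_1' f_2 = 1$, which yields $w' = (f_1/f_2)' = -1/f_2^2$, hence $(\log w)' = w'/w = -1/(f_1 f_2)$. Therefore $\nm{\log w}_{\mathcal{B}} = \sup_{z\in\D} \frac{1-\sabs{z}^2}{\sabs{f_1(z)}\,\sabs{f_2(z)}}$, and more importantly the $\BMOA$ seminorm is controlled by a Carleson-measure estimate on $\sabs{(\log w)'(z)}^2(1-\sabs{z}^2)\,dm(z) = (1-\sabs{z}^2)\,\sabs{f_1(z)}^{-2}\sabs{f_2(z)}^{-2}\,dm(z)$. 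The standard fact (going back to work on univalent functions and the Schwarzian, e.g.\ the results underlying \cite[p.~131]{S:2012}) is that for a Nehari-univalent map $w$ with Schwarzian bounded by $2$ in the $\mathcal{L}^0$-sense, $\log w'$ lies in $\BMOA$; since $\log w$ and $\log w'$ differ by controlled terms (indeed $(\log w)'$ and $(\log w')'$ are comparable via the second symmetric function of $f_1',f_2'$), I would transfer this to $\log w$ directly, or alternatively prove the Carleson condition from scratch using the pointwise bound $\sabs{f_j(z)}^{-1}(1-\sabs{z})^{1/2}\lesssim 1$ that Nehari univalence provides.

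In more detail, the key intermediate estimate I expect to need is a two-sided control: Nehari's condition implies $w$ is univalent, so by the Koebe-type distortion for univalent functions $\sabs{w'(z)}(1-\sabs{z}^2) \asymp \dist(w(z),\partial w(\D))$, while $w' = -1/f_2^2$ gives $\sabs{f_2(z)}^2 \asymp (1-\sabs{z}^2)/\dist(w(z),\partial w(\D))$, and symmetrically for $f_1$ using $1/f_1^2$ as the derivative of $f_2/f_1$. Feeding these into $(1-\sabs{z}^2)\,\sabs{f_1 f_2}^{-2} = (1-\sabs{z}^2)\,\sabs{w'(z)}^2\,\sabs{f_1/f_2}^{-2}\cdot\sabs{f_2}^{-4}\cdot\sabs{f_2}^4$ — more simply, $(\log w)' = w'/w$ so $\sabs{(\log w)'(z)}^2(1-\sabs{z}^2)$ is exactly the density for $w$ being in the "logarithmic Bloch/BMOA" class of univalent maps — one reduces to the known statement that $\log w' \in \BMOA$ whenever $w$ is univalent (this is classical; it also follows since $\log w'$ has Bloch norm $\leq 6$ and a univalent-function Bloch function with univalent-type growth of the derivative satisfies the Carleson condition). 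The main obstacle I anticipate is making the passage from $\log w'$ to $\log w = \log f_1 - \log f_2$ airtight: one must check that the difference $\log w - \log w'$ — whose derivative is $(\log w)' - (\log w')' = -\frac{1}{f_1 f_2} - \frac{w''}{w'}$, and $w''/w' = -2 f_2'/f_2$ — is itself in $\BMOA$, which again reduces to a Nehari-driven pointwise bound on $f_2'/f_2$ times $(1-\sabs{z}^2)$. All of these pieces are standard consequences of Nehari univalence plus the elementary distortion theory for univalent functions, so I would organize the proof as: (1) invoke Nehari to get univalence of $w$ and construct the zero-free basis by a M\"obius adjustment of the range; (2) compute $(\log f_1 - \log f_2)' = -1/(f_1 f_2)$ via the Wronskian; (3) use distortion estimates for the univalent $w$ to verify the Carleson-measure condition for $\sabs{(\log f_1 - \log f_2)'}^2(1-\sabs{z}^2)\,dm$, concluding $\log f_1 - \log f_2 \in \BMOA$.
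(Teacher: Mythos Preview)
Your overall architecture matches the paper's: Nehari's criterion gives univalence of the quotient $h=g_1/g_2$, a M\"obius adjustment of the range (choosing two omitted values $\zeta_1,\zeta_2$ and setting $f_j=g_1-\zeta_j g_2$) produces a zero-free basis, and the claim reduces to $\log w\in\BMOA$ for $w=f_1/f_2$ univalent and zero-free. The paper then finishes in one line by citing Baernstein \cite[Corollary~1, p.~21]{B:1980}: for \emph{any} univalent zero-free $w\in\H(\D)$ one has $\log w\in\BMOA$.

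Your final step, by contrast, has a genuine gap. You route through the companion classical fact $\log w'\in\BMOA$ and then try to bridge to $\log w$. For the bridge you invoke ``a Nehari-driven pointwise bound on $f_2'/f_2$ times $(1-|z|^2)$'', but a pointwise bound of that type yields only $\log f_2\in\mathcal{B}$, not $\BMOA$; Bloch membership of the difference does not upgrade $\log w'\in\BMOA$ to $\log w\in\BMOA$. Your alternative direct Carleson approach rests on the claimed inequality $|f_j(z)|^{-1}(1-|z|)^{1/2}\lesssim 1$, which is false already for $A\equiv 0$: take $f_2(z)=1-z$, zero-free in $\D$, for which $|f_2(r)|^{-1}(1-r)^{1/2}=(1-r)^{-1/2}\to\infty$.

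Your $\log w'$ route is in fact salvageable, but not by the argument you give. From the Wronskian normalization $w'=-1/f_2^2$, so $\log w'=i\pi-2\log f_2$; since $w$ is univalent, $\log w'\in\BMOA$ gives $\log f_2\in\BMOA$ immediately. Applying the same to $1/w=f_2/f_1$ (still univalent, with $(1/w)'=1/f_1^2$) yields $\log f_1\in\BMOA$, and subtraction finishes. Either cite Baernstein's $\log w$ result directly, as the paper does, or use this two-line fix; the distortion-and-Carleson machinery you sketch is both unnecessary and, as written, incomplete.
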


If $A\in\mathcal{L}^0$, then any zero-free solution $f$ of \eqref{eq:de2}
satisfies $\log f\in\mathcal{B}$ by \cite[Theorem~4(ii)]{GNR:preprint}. The contribution
of Lemma~\ref{lemma:zfbase} lies in the fact that linearly independent zero-free solutions
are shown to be closely related to each other.
If $A\in\mathcal{L}^1$, then any zero-free solution 
$f$ of \eqref{eq:de2} satisfies $\log f\in\BMOA$ by \cite[Theorem~4(i)]{GNR:preprint},
and therefore the $\mathcal{L}^1$-counterpart of Lemma~\ref{lemma:zfbase} is trivial.


\begin{proof}[Proof of Lemma~\ref{lemma:zfbase}]
Let $g_1$ and $g_2$ be linearly independent solutions of \eqref{eq:de2}
where $\nm{A}_{\mathcal{L}^0}\leq 1$. It follows that $h=g_1/g_2$ is a~locally
univalent meromorphic (not necessarily analytic) function whose Schwarzian derivative $S_h = 2A$ satisfies 
$\nm{S_h}_{\mathcal{L}^0}\leq 2$, and therefore $h$ is univalent in $\D$ by \cite[Theorem~I]{N:1949}.
Consequently, there exist two distinct values $\zeta_1,\zeta_2\in\C \cup \{\infty\}$
which belong to the complement of $h(\D)$ with respect to the extended complex plane.
If $\zeta_j\in\C$ then define $f_j = g_1 - \zeta_j g_2$, while otherwise let $f_j=g_2$.
We conclude that $f_1$ and $f_2$ are linearly independent zero-free
solutions of \eqref{eq:de2}.

Finally, $w=f_1/f_2$ is a~locally univalent analytic zero-free function, 
whose Schwarzian derivative agrees with $S_h$. It follows that $w$ is univalent,
and therefore $\log w \in\BMOA$ by \cite[Corollary~1, p.~21]{B:1980}. The claim follows.
\end{proof}


\begin{proof}[Proof of Theorem~\ref{thm:zfbase}]
Let $f_1$ and $f_2$ be linearly independent non-vanishing solutions of \eqref{eq:de2}.
Their existence follows from $\nm{A}_{\mathcal{L}^0}\leq 1$ as in the
proof of Lemma~\ref{lemma:zfbase}. Without loss of generality, we may assume 
that $f_2\in\bigcup_{0<p<\infty} H^p$ is the zero-free solution given by the hypothesis.
Any solution $f$ of \eqref{eq:de2} can be represented in the form
$f=\alpha f_1 + \beta f_2= f_2 \, ( \alpha \, e^{\log f_1- \log f_2} + \beta ),$
where $\alpha,\beta\in\C$ are constants depending on $f$. Since
$\log f_1 - \log f_2\in\BMOA$ by Lemma~\ref{lemma:zfbase}, 
we deduce $\exp(\log f_1 - \log f_2)\in \bigcup_{0<p<\infty} H^p$ by \cite[Theorem~1]{CS:1976}.
This proves the assertion.
\end{proof}


\section{Proof of Theorem~\ref{thm:bmoa_not}}

Theorem~\ref{thm:bmoa_not} reveals that finitely valent solutions possess a~unique property,
which is not even found from all bounded analytic functions.
To construct a~bounded function $f\in\mathcal{H}(\D)$ for which \eqref{eq:newest} fails,
consider a~Blaschke sequence which is not a~zero-sequence 
for the weighted Dirichlet space $\mathcal{D}_s$ for fixed $0<s<1$,
and let $f$ be the corresponding Blaschke product.
See \cite[p.~1981]{PP:2011} for more details.


\begin{proof}[Proof of Theorem~\ref{thm:bmoa_not}]
Let $\beta$ be  any constant such that $\nm{A}_{\mathcal{L}^1}/2<\beta<\infty$,
and fix $\alpha$ such that $\nm{A}_{\mathcal{L}^1}/4 < \alpha<\beta/2$.
Since $f$ is a solution of \eqref{eq:de2} for $A\in\mathcal{L}^1$, \eqref{eq:logest} 
holds by Lemma~\ref{lemma:reprelog}(i). As in \cite[p.~593]{P:1977}, we compute
\begin{align*}
  \int_{D(0,r)} |f'(z)|^2 \, dm(z) & = \int_{f(D(0,r))} \Bigg( \sum_{z\in\D\, : f(z)=\zeta} 1 \Bigg) \, dm(\zeta)\\
                                   &  \leq \int_{D(0,M_\infty(r,f))} n(f,\zeta)\,  dm(\zeta)
                                     \leq 4 \big( M_\infty(r,f) + 1 \big)^2 \cdot V_f\\
                                   & \lesssim \left( \log\frac{e}{1-r} \right)^{2\alpha}, \quad 0<r<1,
\end{align*}
by \eqref{eq:intnumber} and the generic change of variable formula \cite[Proposition~2.1]{A:1992}. Now 
\begin{align*}
  & \int_{\D} |f'(z)|^2 \left( \log\frac{e}{1-|z|} \right)^{-\beta} \, dm(z) \\
  & \qquad = \int_0^1 \left( \int_{D(0,r)} |f'(z)|^2 \, dm(z) \right) \frac{\beta}{(1-r) \big( \log\frac{e}{1-r} \big)^{\beta+1}} \, dr \\
  & \qquad \lesssim \int_0^1 \frac{dr}{(1-r) \big( \log\frac{e}{1-r} \big)^{1+ \beta-2\alpha}}  < \infty
\end{align*}
by Fubini's theorem. 
\end{proof}

The following example concerns the valence of solutions of \eqref{eq:de2}.


\begin{example} \label{ex:valent}
Let $0<\alpha<1$.
As in \cite[Example~5(b)]{HKR:2016}, we conclude that $f(z)=(\log(e/(1-z)))^\alpha$
is a solution of \eqref{eq:de2} for
\begin{equation*}
  A(z)=\frac{-\alpha}{(1-z)^2} \bigg( (\alpha-1)\left(\log\frac{e}{1-z} \right)^{-2}
  +\left(\log\frac{e}{1-z}\right)^{-1} \bigg), \quad z\in\D,
\end{equation*}
where $\nm{A}_{\mathcal{L}^1}\lesssim \alpha$.
Since $z\mapsto \log(e/(1-z))$ is univalent in $\D$, 
we see that $f$ is finitely valent for $\alpha\in (0,1) \cap \mathbb{Q}$ and
infinitely valent for $\alpha\in (0,1) \setminus \mathbb{Q}$. \hfill $\diamond$
\end{example}


\section{Proofs of Theorems~\ref{thm:conv_preli} and~\ref{thm:conv}}

Let $f_1$ and $f_2$ be linearly independent solutions of \eqref{eq:de2} for $A\in\mathcal{H}(\D)$. 
We may assume that the Wronskian determinant satisfies $f_1f_2'-f_1'f_2 = 1$. Differentiate this identity
once to obtain $f_1f_2''-f_1''f_2 = 0$, and differentiate it twice to deduce
$f_1'''f_2 - f_1 f_2''' = f_1'f_2'' - f_1''f_2' = A$, where the last equality follows from \eqref{eq:de2}.


\begin{proof}[Proof of Theorem~\ref{thm:conv_preli}]
Since $f_1,f_2\in \mathcal{B}$, we conclude that $f_1'',f_2''\in \mathcal{L}^0$. 
Define $h(z) = |f_1(z)| + |f_2(z)|$ for $z\in\D$.
Function $h$ is non-vanishing as the Wronskian determinant satisfies $f_1f_2'-f_1'f_2 = 1$. On one hand, 
\begin{equation*}
\begin{split}
  |A(z)| & = \frac{|f_1(z) A(z)| + |f_2(z) A(z)|}{|f_1(z)| + |f_2(z)|} = \frac{|f_1''(z)|+|f_2''(z)|}{|f_1(z)| + |f_2(z)|}\\
  & \lesssim \frac{\max\{\nm{f_1}_{\mathcal{B}},\nm{f_2}_{\mathcal{B}}\}}{(1-|z|^2)^3} \cdot \frac{1-|z|^2}{h(z)}, \quad z\in\D,
\end{split}
\end{equation*}
with an~absolute comparison constant. On the other hand, 
\begin{equation*}
  |A(z)| \leq |f_1'''(z)| |f_2(z)| + |f_1(z)| |f_2'''(z)|
  \lesssim \frac{\max\{\nm{f_1}_{\mathcal{B}},\nm{f_2}_{\mathcal{B}}\}}{(1-|z|^2)^3} \, h(z), \quad z\in\D.
\end{equation*}

Since $\min\{ x/y, y \} \leq \sqrt{x}$ for all $0<x,y<\infty$, we obtain
\begin{equation*}
\min\left\{ (1-|z|^2)/h(z), \, h(z)\right\} \leq \sqrt{1-|z|^2}, \quad z\in\D.
\end{equation*}
The assertion $\sup_{z\in\D} |A(z)|(1-|z|^2)^{5/2}\lesssim\max\{\nm{f_1}_{\mathcal{B}},\nm{f_2}_{\mathcal{B}}\}$ follows. 
\end{proof}

The proof of Theorem~\ref{thm:conv} is similar to the one above, with the difference
that the auxiliary function $h$ in the proof of Theorem~\ref{thm:conv_preli},
is now uniformly bounded away from zero by the assumption.


\begin{proof}[Proof of Theorem~\ref{thm:conv}]
Since $f_1,f_2\in \mathcal{B}$, we have $f_1'',f_2''\in \mathcal{L}^0$. By \eqref{eq:de2},
\begin{equation*}
\begin{split}
  \sup_{z\in\D} \, |A(z)| (1-|z|^2)^2 & = \sup_{z\in\D}\, \frac{|f_1(z) A(z)| + |f_2(z) A(z)|}{|f_1(z)| + |f_2(z)|} \, (1-|z|^2)^2 \\
  & \leq \left( \, \inf_{z\in\D} \big( |f_1(z)| + |f_2(z)| \big) \right)^{-1} \big( \nm{f_1''}_{\mathcal{L}^0} + \nm{f_2''}_{\mathcal{L}^0} \big).
\end{split}
\end{equation*}
Let $w=f_1/f_2$, which implies that $w'=-1/f_2^2$.
To see that $w^{\#}$ is bounded in $\D$, it suffices to write
\begin{equation} \label{eq:mmest}
\begin{split}
  \sup_{z\in\D} \, w^{\#}(z) & = \sup_{z\in\D} \, \frac{1}{|f_1(z)|^2 + |f_2(z)|^2} 
  \leq  \sup_{z\in\D} \, \frac{2}{\big( |f_1(z)| + |f_2(z)| \big)^2}  \\
  & \leq 2  \left( \, \inf_{z\in\D} \big( |f_1(z)| + |f_2(z)| \big) \right)^{-2}.
\end{split}
\end{equation}
This completes the proof.
\end{proof}

We take the opportunity to mention an~interesting application of \eqref{eq:mmest}.
Let $f_1,f_2$ be linearly independent solutions of \eqref{eq:de2} for $A\in\mathcal{H}(\D)$ 
such that $\inf_{z\in\D} \, (|f_1(z)|+ |f_2(z)|) >0$,
and let $z_1,z_2\in\D$ be (necessarily distinct) points at which $f_1(z_1)=0=f_2(z_2)$. 
Let $\gamma(z_1,z_2)$ denote the straight line segment from $z_1$ to~$z_2$.
Since $z_1$ is a~(simple) zero of~$w=f_1/f_2$, and $z_2$ is a~(simple) pole of~$w$,
we deduce
\begin{equation*}
  1 \lesssim \int_{\gamma(z_1,z_2)} \frac{|w'(z)|}{1+|w(z)|^2} \, |dz| 
  \leq \left( \, \sup_{z\in\D} \, w^{\#}(z) \right) | z_1 - z_2 |
\end{equation*}
as the spherical length of $w(\gamma(z_1,z_2))$ is uniformly bounded from below.
Therefore, \eqref{eq:mmest} implies that $|z_1-z_2|$ is uniformly bounded away from zero.


\section{Proof of Theorem~\ref{thm:bmoa}}

By \cite[Theorem~1]{Z:2003} and the subharmonicity of $|A|$, we deduce
\begin{equation*}
  \nm{A}_{\mathcal{L}^1} \lesssim \, \sup_{a\in\D}  \left( \log\frac{e}{1-|a|} \right) \int_{\D} |A(z)| (1-|\varphi_a(z)|^2) \, dm(z).
\end{equation*}
Consequently, when proving Theorem~\ref{thm:bmoa}, we may assume that all solutions
of \eqref{eq:de2} are in $\mathcal{B}$ by \cite[Corollary~4(b)]{HKR:2016} or Theorem~\ref{thm:imp}.


\begin{proof}[Proof of Theorem~\ref{thm:bmoa}]
Let $f$ be a solution of \eqref{eq:de2} and consider its normalized hyperbolic translates 
$g_a(z) = f(\varphi_a(z)) - f(a)$ for $a\in\D$. To prove $f\in\BMOA$ it suffices to show that
\begin{equation} \label{eq:need}
  \sup_{a\in\D} \sup_{0<r<1} m(r,g_a) 
  = \sup_{a\in\D} \sup_{0<r<1} \, \frac{1}{2\pi} \int_0^{2\pi} \log^+ |g_a(re^{i\theta})| \, d\theta 
  < \infty
\end{equation}
by \cite[Corollary~2, p.~15]{B:1980}. We proceed to verify that the proximity functions $m(r,g_a)$
satisfy \eqref{eq:need}.

A straight-forward computation reveals that $g_a\in\mathcal{H}(\D)$ is a solution of 
the non-homogenous linear differential equation
\begin{equation*}
  g_a'' + B_a \, g_a' + C_a \, g_a = - f(a) \, C_a,
\end{equation*}
where $B_a,C_a\in\mathcal{H}(\D)$ are given by
\begin{equation*}
  B_a(z) = - \, \frac{\varphi_a''(z)}{\varphi_a'(z)}, 
  \quad C_a(z) =A\big(\varphi_a(z)\big) \, \varphi_a'(z)^2, 
  \quad z\in\D.
\end{equation*}
By \cite[Corollary~3(a)]{HKR:2009}, we deduce
\begin{align*}
  m(r,g_a) & \lesssim 1 + \log^+ \!\big( |f'(a)|(1-|a|^2) \big) \\
  & \qquad + \int_0^{2\pi} \log^+ \!\left( \int_0^r |A(\varphi_a(se^{i\theta}))| |\varphi_a'(se^{i\theta})|^2 |f(a)| (1-s) \, ds \right)  d\theta\\
  & \qquad + \int_{D(0,r)} |A(\varphi_a(z))| |\varphi_a'(z)|^2 (1-|z|^2) \, dm(z)\\
  & \qquad + \int_{D(0,r)} \left| \frac{\varphi_a''(z)}{\varphi_a'(z)} \right|  dm(z)
    + \int_{D(0,r)} \left| \left( \frac{\varphi_a''}{\varphi_a'} \right)'\!(z)\right| (1-|z|^2) \, dm(z)
\end{align*}
for all $0<r<1$, where the comparison constant is independent of $a\in\D$. The area integrals
involving $B_a$ and $B_a'$ are uniformly bounded for $0<r<1$ and $a\in\D$ by standard estimates, while
\begin{equation*}
  \sup_{a\in\D} \sup_{0<r<1} \, \int_{D(0,r)} |A(\varphi_a(z))| |\varphi_a'(z)|^2 (1-|z|^2) \, dm(z)
\end{equation*}
is at most \eqref{eq:bmoaa} by a conformal change of variable. Recalling that 
$\log^+ x \leq x$ for all positive $x$, we conclude
\begin{align}
  & \sup_{a\in\D} \sup_{0<r<1} \, \int_0^{2\pi} \log^+ 
    \left( \int_0^r |A(\varphi_a(se^{i\theta}))| |\varphi_a'(se^{i\theta})|^2 |f(a)| (1-s) \, ds \right)  d\theta \notag\\
  & \qquad \lesssim \sup_{a\in\D} \, M_\infty(|a|, f)  \int_{\D} |A(z))| (1-|\varphi_a(z)|^2) \, dm(z) \label{eq:finn}.
\end{align}
The quantity \eqref{eq:finn} is finite by \eqref{eq:bmoaa} and the fact $f\in\mathcal{B}$.
This proves \eqref{eq:need}, and hence Theorem~\ref{thm:bmoa}.
\end{proof}


\section{Proofs of Theorem~\ref{thm:vmoa} and Corollary~\ref{cor:vmoa}} \label{sec:slast}

The proof of Theorem~\ref{thm:vmoa} is based on the following result \cite[Corollary~5.3]{R:2003}:
If $f\in\mathcal{H}(\D)$ and 
\begin{equation} \label{eq:vmoan}
\int_0^1 M_\infty(r,f'')^2 (1-r^2)^3 \, dr<\infty,
\end{equation}
then $f\in\VMOA$.


\begin{proof}[Proof of Theorem~\ref{thm:vmoa}]
Let $f$ be a non-trivial solution of \eqref{eq:de2}, and let $0<r_0<1$ be fixed.
As in the proof of Theorem~\ref{thm:imp}, we obtain
\begin{align*}
M_\infty(r,f'')  \leq M_\infty(r,A) \, M_\infty(r,f)
   \lesssim  M_\infty(r,A)  \exp\!\left(\, \int_{r_0}^r M_\infty(t,A) (1-t) \, dt \right)
\end{align*}
for $r_0<r<1$, where the comparison constant  is independent of $r$. 
The assertion follows as $f\in\VMOA$ by \eqref{eq:vmoageneral} and \eqref{eq:vmoan}.
\end{proof}


\begin{proof}[Proof of Corollary~\ref{cor:vmoa}]
Fix any $0<r_0<1$. The coefficient condition \eqref{eq:corvmoa} implies
that there exists an~absolute constant $0<C<\infty$ such that
\begin{equation*}
 \exp\!\left(\, 2 \int_{r_0}^r M_\infty(t,A) (1-t) \, dt \right)
 \lesssim \left( \log\log\frac{e}{1-r} \right)^{2C}, \quad r_0<r<1,
\end{equation*}
where the comparison constant is independent of $r$. The condition
\eqref{eq:vmoageneral} is satisfied by a~straight-forward computation, which concludes the proof.
\end{proof}

The following example shows that the coefficient condition~\eqref{eq:corvmoa} allows
solutions of \eqref{eq:de2} to be unbounded.


\begin{example} \label{ex:unbounded}
Let $0<\alpha<\infty$. Note that
$f(z) = (\log\log e^e/(1-z))^\alpha$, $z\in\D$, is a~zero-free unbounded solution of \eqref{eq:de2} for
\begin{equation*}
  A(z) = - \alpha \, \frac{\alpha - 1 + \left( \log\frac{e^e}{1-z} - 1\right) \!\left( \log \log\frac{e^e}{1-z}\right)}
  {(1-z)^2\left( \log\frac{e^e}{1-z} \right)^2\left( \log \log\frac{e^e}{1-z}\right)^2}, \quad z\in\D.
\end{equation*}
It is immediate that \eqref{eq:corvmoa} is satisfied.
\hfill $\diamond$
\end{example}


\end{document}